\newcommand{\ignore}[1]{}
\newtheorem{theorem}{Theorem}[section]
\newtheorem{lemma}[theorem]{Lemma}
\newtheorem{corollary}[theorem]{Corollary}
\newtheorem{proposition}[theorem]{Proposition}
\theoremstyle{definition}
\newtheorem{definition}[theorem]{Definition}
\newtheorem{example}[theorem]{Example}
\theoremstyle{remark}
\newtheorem{remark}[theorem]{Remark}
\numberwithin{equation}{section}
\newcommand{\bC}{\mathbb{C}}
\newcommand{\bN}{{\mathbb{N}}}
\newcommand{\bZ}{{\mathbb{Z}}}
\newcommand{\bP}{{\mathbb{P}}}
\newcommand{\bQ}{{\mathbb{Q}}}
\newcommand{\bR}{\mathbb{R}}
\newcommand{\adj}{\mathrm{Adj}}
\newcommand{\fM}{{\mathfrak m}}
\newcommand{\fa}{\mathfrak{a}}
\newcommand{\cJ}{{\mathcal J}}
\newcommand{\cO}{{\mathcal O}}
\newcommand{\lra}{{\longrightarrow}}
\newcommand{\A}{\mathfrak{a}}
\newcommand{\Z}{\mathbb{Z}}
\newcommand{\Q}{\mathbb{Q}}
\newcommand{\Oc}{\mathcal{O}}
\newcommand{\m}{\mathfrak{m}}
\newcommand{\J}{\mathcal{J}}
\begin{document}

\title[Poincar\'e series of multiplier ideals]{Poincar\'e series of multiplier ideals in two-dimensional local rings with rational singularities}

\author[M. Alberich]{Maria Alberich-Carrami\~nana}

\author[J. \`Alvarez ]{Josep \`Alvarez Montaner}

\address{Deptartament de Matem\`atiques\\
Universitat Polit\`ecnica de Catalunya\\ Av. Diagonal 647, Barcelona
08028, Spain} \email{Maria.Alberich@upc.edu, Josep.Alvarez@upc.edu}

\author[F. Dachs] {Ferran Dachs-Cadefau }
\address{KU Leuven\\ Department of Mathematics \\ Celestijnenlaan 200B box 2400\\
BE-3001 Leuven, Belgium}

\email{Ferran.DachsCadefau@wis.kuleuven.be}

\author[V. Gonz\'alez]{V\'ictor Gonz\'alez-Alonso}
\address{Institut f\"ur Algebraische Geometrie\\ Leibniz Universit\"at Hannover \\
Welfengarten 1, 30167 Hannover, Germany} \email{gonzalez@math.uni-hannover.de}

\thanks{All four authors were partially supported by Generalitat de Catalunya SGR2014-634 project  and
Spanish Ministerio de Econom\'ia y Competitividad
MTM2015-69135-P.  FDC is also supported by the
KU Leuven grant OT/11/069. VGA is also supported by the ERC StG
279723 ``Arithmetic of algebraic surfaces'' (SURFARI).
 MAC is also with the  Institut de Rob\`otica i Inform\`atica Industrial (CSIC-UPC)
 and the Barcelona Graduate School of Mathematics (BGSMath).}



\begin{abstract}
We study the multiplicity of the jumping numbers of an $\fM$-primary
ideal $\fa$ in a two-dimensional local ring with a rational singularity.
The formula we provide for the multiplicities leads to a very simple
and efficient method to detect whether a given rational number
is a jumping number. We also give an explicit description of the Poincar\'e series
of multiplier ideals associated to $\fa$ proving, in particular, that it is a rational function.
\end{abstract}

\maketitle

\section{Introduction}

Let $X$ be a complex surface with a rational singularity at a point $O\in X$
and $\cO_{X,O}$ its corresponding local ring.  Let $\fa \subseteq \cO_{X,O}$ be an $\fM$-primary
ideal where $\fM=\fM_{X,O}$ is the maximal ideal of $\cO_{X,O}$. Then, for any real exponent $c>0$,
we may consider its corresponding \emph{multiplier ideal} $\J(\fa^c)$.
It turns out that the multiplier ideal becomes smaller as the parameter $c$ grows and, whenever we
have an strict inclusion $\J(\fa^{c -\varepsilon}) \varsupsetneq \J(\fa^c)$ for arbitrarily small
$\varepsilon > 0$, we say that $c$ is a jumping number.

\vskip 2mm

Since $\fa $ is $\fM$-primary, its associated
multiplier ideals are $\fM$-primary as well so they have finite codimension, as $\bC$-vector spaces, in $\cO_{X,O}$. This fact
prompted Ein-Lazarsfeld-Smith-Varolin \cite{ELSV04} to define the \emph{multiplicity} of a jumping number as
the codimension as $\bC$-vector spaces of two consecutive multiplier ideals. In general, for any positive real number $c$
we can define its multiplicity as
$$m(c):= \dim_{\bC} \frac{\J(\A^{c-\varepsilon})}{ \J(\A^{c})}$$ where $\varepsilon$ is small enough. In particular, $c$ is
a jumping number whenever $m(c)>0$.
In order to gather all the information given by all jumping numbers and their corresponding multiplicities,
Galindo-Monserrat \cite{GM10} introduced the so-called  {\it Poincar\'e series of multiplier ideals} associated to $\fa$
as the series with fractional exponents $$P_\A (t)= \sum_{c\in \bR_{>0}} m(c) \hskip 1mm t^{c}.$$ The main result
in \cite{GM10} is the fact that the Poincar\'e series of a simple complete $\fM$-primary ideal $\fa \subseteq
\cO_{X,O}$, for a smooth point $O$,
is rational in the sense that it belongs to the field of fractional functions $\bC(z)$ where the indeterminate $z$
corresponds to a fractional power $t^{1/e}$ for a suitable $e\in \bN_{>0}$.
They also provided a closed formula for $P_\A (t)$ that relies in J\"arviletho's formula \cite{Jar11} for the set
of jumping numbers.

\vskip 2mm

One of the goals of this paper is to extend their result to the case of any  $\fM$-primary ideal in
a surface with a rational singularity at $O$. To do so we provide first a systematic study of the
multiplicities  using the theory of {\it jumping divisors} introduced in \cite{ACAMDC13}. Another goal that we
achieve is to give a simple numerical criterion (see Theorem \ref{cond_suf}) which characterizes whether any given rational
number is a jumping number.

\vskip 2mm

The paper is organized as follows: First we briefly recall the basics on the theory of multiplier ideals
and the aspects on the theory of singularities that we will use throughout this work.

\vskip 2mm

In Section \S3 we review the notion of {\it jumping divisors}
introduced in \cite{ACAMDC13}. In fact we will be mainly interested in
the maximal jumping divisor since it satisfies a nice periodicity property.
In particular we will give a geometrical description of this divisor.
We also point out that, {\em en passant}, we provide several technical
results that will be crucial in the rest of the paper.

\vskip 2mm

The core of the paper can be found in Section \S4. We provide two different formulas to describe
the multiplicity  for any $c\in \bR_{>0}$. The first one (see
Theorem \ref{multiplicity1_c}) is described in terms of the maximal jumping divisor associated
to $c$. The periodicity of this divisor leads to Proposition \ref{growth} that provides a very
clean description of the growth of multiplicities in terms of dicritical components of the maximal
jumping divisor. This is the key result that we will use in the description of the Poincar\'e series
associated to $\fa$ in the final section. The second formula for the multiplicity (see Proposition \ref{virtual2}) is given
using the notion of {\it virtual codimension} introduced in \cite{Cas00} and \cite{Reg96}.

\vskip 2mm

In Section \S5 we provide  a very simple (and efficient) algorithm to compute the set of jumping numbers of $\fa$.
It boils down to compute the multiplicities of the rational numbers in the set of {\it candidate jumping numbers}.
This relies on a simple numerical criterion to characterize jumping numbers (see Theorem \ref{cond_suf}).
Another consequence of the formulas for the multiplicities is that we can describe those jumping numbers
contributed by dicritical divisors. In particular we give in Theorem \ref{jn1-2} a full description of the
jumping numbers in the interval $(1,2]$.

\vskip 2mm

The main result of Section \S6 is a description of the Poincar\'e series of multiplier ideals
for any $\fM$-primary ideal $\fa$. As a consequence, we can easily recover the case of simple ideals obtained
by Galindo-Monserrat \cite{GM10}  in the smooth case.
Finally we relate the Poincar\'e series to the Hodge spectrum of a generic element $f\in \fa$.
In particular we recover an old result of L{\^e} V{\u{a}}n Th{\`a}nh-Steenbrink \cite{LVTS89}
describing the Hodge spectrum of a plane curve.


\section{Preliminaries}
Let $\left(X,O\right)$ be a germ of complex surface with at worst a rational singularity.
Let $\cO_{X,O}$ denote the local ring at $O$, $\fM = \fM_{X,O} \subseteq \cO_{X,O}$ the maximal ideal, and
let $\fa \subseteq \fM$ be an $\fM$-primary ideal. Recall that a {\em log-resolution} of the pair
$\left(X,\fa\right)$ (or of $\fa$, for short) is a birational morphism $\pi: X' \rightarrow X$ such that
\begin{enumerate}
\item $X'$ is smooth (in particular, $\pi$ is a resolution of the singularity),
\item the exceptional locus $E = Exc\left(\pi\right)$ is a divisor with simple normal crossings (the irreducible components $E_1,\ldots,E_r$ of $E$ are all smooth and intersect transversely), and
\item the preimage of $\fa$ is locally principal, that is, $\fa\cdot\cO_{X'} = \cO_{X'}\left(-F\right)$ for some effective divisor $F$ supported on $E$.
\end{enumerate}

The theory of rational singularities was introduced by Artin in \cite{Art66} and further
developed by Lipman in \cite{Lip69}. We recall that the point $O$ being (at worst) a rational singularity means that $R^1\pi_*\cO_{X'}=0$ for some (hence any) desingularization. A first consequence of Artin's results is that the exceptional divisor of any desingularization is a tree of rational curves. Indeed, according to \cite[Proposition 1]{Art66} a singularity is rational if and only if any effective divisor $D$ with exceptional support has arithmetic genus (see \cite[Page 486]{Art62})
$$p_a\left(D\right) = 1 + \frac{1}{2}\left(K_{X'}+D\right)\cdot D \leqslant 0.$$
Since the components $E_i$ of the exceptional divisor are smooth, we have $p_a\left(E_i\right) \geq 0$, hence $p_a\left(E_i\right)=0$, which means that they are rational. Furthermore, there cannot be a {\em cycle} $E_1,\ldots,E_k$ of exceptional components (i.e., such that $E_1 \cdot E_2 = E_2 \cdot E_3 = \cdots = E_1 \cdot E_k = 1$ and $E_i \cdot E_j=0$ for any other $i \neq j$), since the formula $p_a\left(A+B\right) = p_a\left(A\right) + p_a\left(B\right) + A\cdot B - 1$ would give $p_a\left(E_1+\cdots+E_k\right) = 1$.

\vskip 2mm

The above numerical characterization \cite[Proposition 1]{Art66} of rational singularities is not satisfying enough, since it involves testing every effective exceptional divisor. In the same work, Artin proved in \cite[Theorem 3]{Art66} that it is enough to check the {\it fundamental cycle}, the
unique smallest non-zero effective divisor $Z$
(with exceptional support) such that
$$Z\cdot E_i \leqslant 0 \qquad \text{for every } \, i=1,\dots, r.$$
Another important property of the fundamental cycle is that
$\fM\cdot\cO_{X'} = \cO_{X'}\left(-Z\right)$, hence any desingularization is a log-resolution of the maximal ideal $\fM$.
.

\vskip 2mm

Since rational
singularities are $\bQ$-factorial, it is possible to define a {\em relative canonical divisor} $K_{\pi}$ of $\pi$,
which can be characterized as the unique divisor $K_{\pi} = \sum_{i=1}^r k_i E_i$ supported on the
exceptional divisor and such that
\begin{equation} \label{eq-relative-canonical}
\left(K_{\pi}+E_j\right)\cdot E_j = \left(\sum_{i=1}^r k_i E_i \cdot E_j\right) + E_j^2 = -2
\end{equation}
for every exceptional component $E_j$ (because of the adjunction formula). Note that the coefficients $k_i$ are uniquely
determined because the intersection matrix $\left(E_i \cdot E_j\right)_{i,j}$ is negative-definite, but they are not necessarily
integral nor positive. Moreover, due to this numerical characterization, $K_{X'}$ can be replaced by $K_{\pi}$ to compute the arithmetic genus as $p_a(Z)=1+ \frac{1}{2} \left(K_\pi + Z\right)\cdot Z$.

\vskip 2mm

\vskip 2mm

The ideal $\fa$ being $\fM$-primary, $F$ is supported on the exceptional locus, hence it can be written as
$F= \sum_{i=1}^r e_i E_i$ for some positive integers $e_i$. For any component $E_i$, the {\em excess} of $\fa$ at $E_i$ is defined as
\begin{equation} \label{eq-excess}
\rho_i = - F \cdot E_i \geqslant 0.
\end{equation}
If $C$ is a curve through $O$ defined by a general element in $\fa$, then $\rho_i$ is the number of branches of the strict transform $\widetilde{C}$ that intersect $E_i$. The {\em total excess} is defined as $\rho = \sum_{i=1}^r \rho_i,$ and is therefore the number of
branches at $O$ of a general curve of the linear system defined by $\fa$. In particular, $\rho > 0$.

\vskip 2mm

For any $\bR$-divisor $D = \sum_i d_i D_i$ in $X'$, where the $D_i$ are pairwise different prime divisors,
its {\em round-down} $\left\lfloor D \right\rfloor$, {\em round-up} $\left\lceil D \right\rceil$ and
{\em fractional part} $\left\{D\right\}$ are defined by applying the corresponding operation to the coefficients $d_i$.

\vskip 2mm

The {\em multiplier ideal (sheaf)} associated to $\fa$ and some real number $c \in \bR$ is defined as
$$\J\left(\fa^c\right) = \pi_*\cO_{X'}\left(\left\lceil K_{\pi} - cF \right\rceil\right).$$
Since $\fa$ is $\fM$-primary, any multiplier ideal $\J\left(\fa^c\right)$ is also $\fM$-primary. Furthermore,
for any $\varepsilon > 0$ it holds $\J\left(\fa^c\right) \supseteq \J\left(\fa^{c+\varepsilon}\right)$, with equality for
$\varepsilon$ small enough. Hence the multiplier ideals form a discrete nested sequence
$$\Oc_{X,O}\varsupsetneq\J(\A^{\lambda_1})\varsupsetneq\J(\A^{\lambda_2})\varsupsetneq...\varsupsetneq\J(\A^{\lambda_i})\varsupsetneq...$$
indexed by an increasing sequence of rational numbers $0 < \lambda_1 < \lambda_2 < \ldots$ such that
$\J(\A^{\lambda_i})=\J(\A^c)\varsupsetneq\J(\A^{\lambda_{i+1}})$ for any $c \in \left[\lambda_i,\lambda_{i+1}\right)$. The
$\lambda_i$ are the so-called {\em jumping numbers} of the ideal $\fa$. We point out now
two properties that will be useful in the sequel:
\begin{itemize}
\item (local vanishing) for any $c \in \bR$, it holds $R^1\pi_*\cO_{X'}\left(\left\lceil K_{\pi} - cF \right\rceil\right)=0$, and
\item (Skoda's theorem) $\J\left(\fa^c\right)=\fa \J\left(\fa^{c-1}\right)$ for any $c > 2$.
\end{itemize}
For further properties and some applications of
multiplier ideals, we refer the reader to the book of Lazarsfeld \cite{Laz04}.

Being $\fM$-primary, the multiplier ideals have finite $\bC$-codimension in $\cO_{X,O}$. This fact prompted Ein,
Lazarsfeld and Varolin \cite{ELSV04} to define the {\em multiplicity} of $\lambda_i$ as
$$m\left(\lambda_i\right) = \dim_{\bC}\frac{\J\left(\fa^{\lambda_{i-1}}\right)}{\J\left(\fa^{\lambda_i}\right)}.$$
Since $\J\left(\fa^{\lambda_{i-1}}\right) = \J\left(\fa^{\lambda_i-\varepsilon}\right)$ for small $\varepsilon$, we can
extend this definition to any $c \in \bR$ as
\begin{equation} \label{eq-def-mult-general}
m\left(c\right) := \dim_{\bC}\frac{\J\left(\fa^{c-\varepsilon}\right)}{\J\left(\fa^c\right)}
\end{equation}
With this definition, it is clear that $c$ is a jumping number if and only if $m\left(c\right)>0$.

\vskip 2mm

In order to describe the behavior of the jumping numbers and its multiplicities, Galindo and Montserrat
\cite{GM10} introduced the {\em Poincar\'e series of multiplier ideals} associated to $\fa$, which after our definition of
multiplicity can be written as
\begin{equation} \label{eq-def-Poinc}
P_\A (t)= \sum_{c\in \bR_{>0}} m(c) \hskip 1mm t^{c}.
\end{equation}

We introduce now some technical notation. Given any exceptional component $E_i$, define
$$\adj\left(E_i\right)=\left\{E_j \hskip 2mm | \hskip 2mm  E_i \cdot E_j = 1\right\} \quad \text{and} \quad a\left(E_i\right) = \#\adj\left(E_i\right) = E_i \cdot \left(E-E_i\right),$$
the set of exceptional components adjacent to $E_i$ and its number. More generally, for any reduced exceptional divisor
$D=E_{i_1}+\cdots+E_{i_m}$ define
$$\adj_D\left(E_i\right) = \left\{E_j \leqslant D \hskip 2mm | \hskip 2mm E_i \cdot E_j = 1\right\} \quad \text{and} \quad a_D\left(E_i\right) = \#\adj_D\left(E_i\right),$$
the set of components adjacent to $E_i$ {\em inside} $D$. Define also the set of components adjacent to $D$ as
$$\adj\left(D\right)=\left\{E_j \hskip 2mm | \hskip 2mm E_j \not\leqslant D \, \text{and} \, D \cdot E_j = 1\right\}.$$
Finally, denote by $v_D = m$ (resp. $a_D$) the number of irreducible components of $D$ (resp. intersections between two components of $D$). Since
the exceptional set is a tree of rational curves, any $D$ as before is a collection of trees of rational curves, and it is then clear that
$$\sum_{E_i \leqslant D}a_D\left(E_i\right) = 2 a_D$$
and that $v_D - a_D$ equals the number of connected components of $D$. We also say that $E_i$ is an {\em end} of
$D$ if $E_i \leqslant D$ and $a_D\left(E_i\right)=1$.

\vskip 2mm

Finally we mention that there are two kinds of exceptional divisors that will play a special role throughout this work:

\begin{enumerate}
\item[$\bullet$] An exceptional component $E_i$ is a {\em rupture} component if $a\left(E_i\right) \geqslant 3$, that is, it intersects at least three more components of $E$ (different from $E_i$).
\item[$\bullet$] We say that $E_i$ is {\em dicritical} if $\rho_i > 0$. Dicritical components correspond to
{\it Rees valuations} by \cite{Lip69}.
\end{enumerate}

\section{Jumping divisors}

Recall from \cite[Definition 4.1]{ACAMDC13} that a {\em jumping divisor} for a jumping number $\lambda$ is a reduced
exceptional divisor $G$ such that $\lambda e_i - k_i \in \bZ$ for every irreducible component $E_i \leqslant G$, and for
small $\varepsilon > 0$ satisfies
\begin{equation} \label{jumping-property}
\cJ\left(\fa^{\lambda-\varepsilon}\right) = \pi_*\cO_{X'}\left(\left\lceil K_{\pi}-\lambda F\right\rceil+G\right).
\end{equation}
That is, $G$ gives a jump from the multiplier ideal with exponent $\lambda$ to the previous one. In \cite{ACAMDC13} it was
proved that, given a jumping number $\lambda$, every jumping divisor $G$ satisfies $G_{\lambda} \leqslant G \leqslant H_{\lambda}$
for some special jumping divisors $G_{\lambda}$ and $H_{\lambda}$. These divisors are called respectively {\em minimal} and
{\em maximal} jumping divisor, and the former is extensively studied in \cite{ACAMDC13}. The aim of this section is to study
the maximal one, which can be defined for any positive real number $c$ and will play a prominent role in the rest of the paper.

\begin{definition} \label{df-maximal-jumping}
Given any real number $c \in \bR$, we define its associated {\em maximal jumping divisor} as
\begin{equation} \label{eq-def-Hc}
H_c = \left\lceil K_{\pi}-\left(c-\varepsilon\right) F\right\rceil -\left\lceil K_{\pi}-cF\right\rceil
\end{equation}
for a sufficiently small $\varepsilon>0$. Alternatively, it can be defined as the reduced divisor whose components are the exceptional curves $E_i$ such that $k_i-ce_i \in \bZ$.
\end{definition}

It follows immediately from the definition that the maximal jumping divisors satisfy the following periodicity property.

\begin{lemma} \label{periodic_jd}
For any real number $c \in \bR$, we have $H_c=H_{c+1}$.
\end{lemma}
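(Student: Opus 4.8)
The plan is to use the second (alternative) description of $H_c$ given in Definition \ref{df-maximal-jumping}: the divisor $H_c$ is reduced and its irreducible components are exactly those exceptional curves $E_i$ for which $k_i - c e_i \in \bZ$. Since the $e_i$ are \emph{positive integers} (as recalled just before equation \eqref{eq-excess}, $F = \sum_i e_i E_i$ with $e_i \in \bZ_{>0}$), for every index $i$ we have $k_i - (c+1)e_i = (k_i - c e_i) - e_i$, and $e_i \in \bZ$. Hence $k_i - c e_i \in \bZ$ if and only if $k_i - (c+1)e_i \in \bZ$. Therefore $E_i$ is a component of $H_c$ precisely when it is a component of $H_{c+1}$, and since both divisors are reduced this gives $H_c = H_{c+1}$.

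Alternatively, and perhaps more in the spirit of the first definition \eqref{eq-def-Hc}, one can argue directly with the round-ups: for any $\bR$-divisor and any integral divisor one has $\lceil D + G \rceil = \lceil D \rceil + G$ when $G$ is integral. Applying this with $G = F$ (which is integral) gives $\lceil K_\pi - (c+1-\varepsilon)F \rceil = \lceil K_\pi - (c-\varepsilon)F \rceil - F$ and similarly $\lceil K_\pi - (c+1)F \rceil = \lceil K_\pi - cF \rceil - F$; subtracting, the two copies of $-F$ cancel and we obtain $H_{c+1} = H_c$.

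There is essentially no obstacle here: the only point to be careful about is that the shift by $1$ in the exponent changes each coefficient of $cF$ by the \emph{integer} $e_i$, so that fractional parts (equivalently, the integrality condition $k_i - c e_i \in \bZ$) are preserved; this is exactly where $F$ being an \emph{integral} divisor is used. I would present the short round-up computation as the main argument and remark that it matches the componentwise integrality description.
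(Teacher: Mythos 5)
Your proof is correct and is exactly the argument the paper has in mind: the paper offers no written proof, stating only that the periodicity ``follows immediately from the definition,'' and your observation that $k_i-(c+1)e_i=(k_i-ce_i)-e_i$ with $e_i\in\bZ_{>0}$ (equivalently, the round-up computation $\lceil K_\pi-(c+1)F\rceil=\lceil K_\pi-cF\rceil-F$, with the two copies of $F$ cancelling in the difference defining $H_{c+1}$) is precisely the one-line justification being alluded to.
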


\begin{remark}
The definition of minimal jumping divisors given in \cite[Definition 4.3]{ACAMDC13} is more involved and
is closely related to the algorithm given in loc. cit. for the computation of the chain of multiplier ideals.
Is for this reason that minimal jumping divisors are only defined for jumping numbers in \cite{ACAMDC13}.
 However one may extend the definition to any positive real number $c$ if we consider $G_c=0$
 for any non-jumping number $c > 0$.  Notice that the equality (\ref{jumping-property}) is still trivially
 satisfied for any divisor $G$ such that $G_c \leqslant G \leqslant H_c$.
Regarding the periodicity of the minimal jumping divisor, we only  have $G_c=G_{c+1}$ for
$c>1$ (see \cite[Proposition 4.8]{ACAMDC13}) and there are examples where this equality does not hold for $c \leqslant 1$.
\end{remark}

We focus now on the structure of $H_c$. We first prove some formulas to compute its intersection with its irreducible and connected components.

\begin{lemma} \label{num}
Fix $c \in \bR_{\geqslant 0}$ and consider a component $E_i$ of the jumping divisor $H_c$. Then
$$\left(\left\lceil K_\pi-c F\right\rceil + H_c\right)\cdot E_i = -2 + c \rho_i + a_{H_c}\left(E_i\right) + \sum_{ E_j \in \adj(E_i)} \left\{c e_j-k_j\right\}.$$
\end{lemma}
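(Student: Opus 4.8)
The plan is to compute $\left(\left\lceil K_\pi - cF\right\rceil + H_c\right)\cdot E_i$ by expanding $\left\lceil K_\pi - cF\right\rceil$ in terms of the known divisors $K_\pi$ and $F$ and the fractional parts. First I would write, for each exceptional component $E_j$,
$$\left\lceil k_j - c e_j\right\rceil = k_j - c e_j + \left\{c e_j - k_j\right\},$$
which holds because $\left\lceil x \right\rceil = x + \left\{-x\right\}$ for any real $x$ (using the convention $\left\{0\right\}=0$, so that this is exact even when $ce_j-k_j\in\bZ$). Summing over $j$ with coefficients $E_i\cdot E_j$, this gives
$$\left\lceil K_\pi - cF\right\rceil \cdot E_i = \left(K_\pi - cF\right)\cdot E_i + \sum_{j=1}^r \left\{c e_j - k_j\right\}\left(E_j\cdot E_i\right).$$

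Next I would evaluate the three resulting terms. For the first, the defining relation \eqref{eq-relative-canonical} for the relative canonical divisor gives $\left(K_\pi + E_i\right)\cdot E_i = -2$, hence $K_\pi\cdot E_i = -2 - E_i^2$; and the excess definition \eqref{eq-excess} gives $-cF\cdot E_i = c\rho_i$. For the last term, I would split the sum $\sum_j \left\{ce_j-k_j\right\}(E_j\cdot E_i)$ according to whether $j = i$ or $E_j\in\adj(E_i)$ (all other intersection numbers vanish since $E$ is a simple normal crossings divisor, in fact a tree). The $j=i$ contribution is $\left\{ce_i-k_i\right\}E_i^2$, but since $E_i\leqslant H_c$ we have $k_i - ce_i\in\bZ$ by the alternative description of $H_c$ in Definition \ref{df-maximal-jumping}, so $\left\{ce_i-k_i\right\}=0$ and this term drops out. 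The remaining contribution is exactly $\sum_{E_j\in\adj(E_i)}\left\{ce_j-k_j\right\}$, since each such $E_j$ meets $E_i$ transversely in a single point, i.e. $E_j\cdot E_i = 1$. Finally, $H_c\cdot E_i = \sum_{E_j\leqslant H_c} (E_j\cdot E_i)$; again the $j=i$ term $E_i^2$ appears, and the off-diagonal terms count precisely the components of $H_c$ adjacent to $E_i$, giving $H_c\cdot E_i = E_i^2 + a_{H_c}(E_i)$.

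Adding everything up, the two occurrences of $E_i^2$ — one from $K_\pi\cdot E_i = -2 - E_i^2$ and one from $H_c\cdot E_i = E_i^2 + a_{H_c}(E_i)$ — cancel, leaving
$$\left(\left\lceil K_\pi - cF\right\rceil + H_c\right)\cdot E_i = -2 + c\rho_i + a_{H_c}(E_i) + \sum_{E_j\in\adj(E_i)}\left\{ce_j-k_j\right\},$$
as claimed. The argument is essentially a bookkeeping exercise; the only point requiring care — and the one I would flag as the main potential pitfall — is the treatment of the diagonal term $\left\{ce_i-k_i\right\}E_i^2$, where one must invoke the hypothesis $E_i\leqslant H_c$ to kill it, and the consistent use of the convention $\left\{n\right\}=0$ for integers $n$ so that the identity $\left\lceil x\right\rceil = x + \left\{-x\right\}$ is valid without case distinctions. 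One should also note the computation does not use that $c$ is a jumping number, only $c\in\bR_{\geqslant 0}$ together with $E_i\leqslant H_c$, consistent with the statement.
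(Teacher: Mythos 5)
Your proof is correct and follows essentially the same route as the paper: decompose $\left\lceil K_\pi - cF\right\rceil$ as $(K_\pi - cF) + \left\{cF - K_\pi\right\}$, use adjunction for the $K_\pi$ term, the definition of excess for the $F$ term, the adjacency count for the $H_c$ term, and the hypothesis $E_i\leqslant H_c$ to kill the diagonal fractional contribution. The only cosmetic difference is that the paper groups the self-intersection terms as $(K_\pi+E_i)\cdot E_i=-2$ and $(H_c-E_i)\cdot E_i=a_{H_c}(E_i)$, whereas you cancel the two occurrences of $E_i^2$ explicitly; this is the same bookkeeping.
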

\begin{proof}
For any $E_i\leqslant H_c$  we have
\begin{multline*}
(\left\lceil K_{\pi}-c F\right\rceil + H_c)\cdot E_i = (\left(K_\pi-c F\right)+\left\{-K_\pi+c F\right\} + H_c - E_i + E_i)\cdot E_i = \\
= \left(K_{\pi}+ E_i\right)\cdot E_i - cF\cdot E_i + \left(H_c-E_i\right)\cdot E_i + \left\{cF-K_{\pi}\right\}\cdot E_i.
\end{multline*}
Let us now compute each summand separately. The first three terms are easy: $\left(K_{\pi}+E_i\right)\cdot E_i = -2$ follows from the adjunction formula, $-cF \cdot E_i = c\rho_i$ holds by definition, and clearly $a_{H_c}\left(E_i\right) = \left(H_c-E_i\right)\cdot E_i$ because $E_i \leqslant H_c$. It only remains to prove that
\begin{equation} \label{eq-decimal-part}
\left\{cF-K_{\pi}\right\}\cdot E_i = \sum_{ E_j \in \adj(E_i)}
\left\{c e_j-k_j\right\},
\end{equation}
which is also quite immediate. Indeed, writing $\left\{cF-K_{\pi}\right\} = \sum_{j=1}^r\left\{ce_i-k_i\right\}E_j$, (\ref{eq-decimal-part}) follows by observing that, for $j \neq i$, $E_j \cdot E_i = 1$ if and only if $E_j \in \adj\left(E_i\right)$, and the term corresponding to $j=i$ vanishes because we assumed $E_i \leqslant H_c$, hence $ce_i-k_i \in \bZ$.
\end{proof}


\begin{corollary} \label{cor-integer}
For any $c \in \bR_{>0}$ and any $E_i \leqslant H_c$, the sum
$$c \rho_i + \sum_{ E_j \in \adj(E_i)} \left\{c e_j-k_j\right\}$$
is an integer.
\end{corollary}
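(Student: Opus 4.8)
The plan is to deduce Corollary \ref{cor-integer} directly from Lemma \ref{num}, using the observation that every other quantity appearing in that formula is already an integer. Concretely, Lemma \ref{num} gives
$$\left(\left\lceil K_\pi-c F\right\rceil + H_c\right)\cdot E_i = -2 + c \rho_i + a_{H_c}\left(E_i\right) + \sum_{ E_j \in \adj(E_i)} \left\{c e_j-k_j\right\},$$
so it suffices to argue that the left-hand side is an integer and that $-2 + a_{H_c}\left(E_i\right)$ is an integer; subtracting then leaves $c\rho_i + \sum_{E_j\in\adj(E_i)}\{ce_j-k_j\}$ as an integer.

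The second point is immediate: $a_{H_c}(E_i)$ is a nonnegative integer by definition, being the number of components of $H_c$ adjacent to $E_i$. For the first point, I would note that $\left\lceil K_\pi - cF\right\rceil$ is by construction an integral divisor (each coefficient is the round-up of a real number), and $H_c$ is a reduced, hence integral, exceptional divisor; therefore their sum is an integral exceptional divisor. Intersecting an integral divisor supported on the exceptional locus with an exceptional curve $E_i$ yields an integer, since all the intersection numbers $E_j\cdot E_i$ are integers. Hence the left-hand side of the displayed formula lies in $\bZ$.

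Putting these together, $c\rho_i + \sum_{E_j\in\adj(E_i)}\{ce_j-k_j\}$ equals the integer $\left(\left\lceil K_\pi-cF\right\rceil + H_c\right)\cdot E_i + 2 - a_{H_c}(E_i)$, which proves the claim. I do not anticipate any real obstacle here: the corollary is an essentially formal consequence of Lemma \ref{num} once one records that rounding produces integral divisors and that intersection numbers among exceptional curves are integers. The only thing worth stating carefully is that the hypothesis $E_i\leqslant H_c$ is what makes the $j=i$ term disappear from the sum in Lemma \ref{num}, so that the sum over $\adj(E_i)$ there is exactly the relevant fractional contribution — but this is already built into the statement of the lemma we are invoking.
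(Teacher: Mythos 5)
Your argument is correct and is exactly the one the paper intends: the corollary is stated without proof precisely because it follows immediately from Lemma \ref{num} by noting that the intersection number on the left is an integer (an integral divisor paired with an exceptional curve) and that $-2+a_{H_c}(E_i)$ is an integer. Nothing is missing.
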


\begin{proposition} \label{Num_condition_c}
Fix any $c\in \bR_{>0}$, and let $H_c$ be its associated maximal jumping divisor. Then the following inequalities hold:
\begin{itemize}
\item $\left(\left\lceil K_{\pi}- cF\right\rceil + H_c\right)\cdot E_i \geqslant -1$ for all $E_i\leqslant H_c$, and
\item $\left(\left\lceil K_{\pi}- cF\right\rceil + H_c\right)\cdot H \geqslant -1$ for any connected component $H\leqslant H_c$.
\end{itemize}
\end{proposition}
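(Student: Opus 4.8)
The plan is to derive both inequalities from the formula in Lemma~\ref{num} together with the integrality statement of Corollary~\ref{cor-integer}. Write $T_i = c\rho_i + \sum_{E_j \in \adj(E_i)}\{ce_j-k_j\}$ for each component $E_i \leqslant H_c$; by Corollary~\ref{cor-integer} this is a non-negative integer (non-negativity is clear since $\rho_i \geqslant 0$, $c>0$, and the fractional parts are $\geqslant 0$). Lemma~\ref{num} then reads $(\lceil K_\pi - cF\rceil + H_c)\cdot E_i = -2 + T_i + a_{H_c}(E_i)$. For the first inequality, I would argue by contradiction: if this intersection number were $\leqslant -2$, then $T_i + a_{H_c}(E_i) \leqslant 0$, forcing $T_i = 0$ and $a_{H_c}(E_i) = 0$. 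But $a_{H_c}(E_i)=0$ means $E_i$ is an isolated component of $H_c$, and $T_i=0$ means in particular $\rho_i = 0$ (as $c>0$), i.e. $E_i$ is non-dicritical, and also that $ce_j - k_j \in \bZ$ for every neighbour $E_j$ of $E_i$ in the whole exceptional divisor $E$ — so all neighbours of $E_i$ in $E$ already lie in $H_c$, contradicting $a_{H_c}(E_i)=0$ unless $E_i$ has no neighbours in $E$ at all. The remaining degenerate case ($E_i$ an isolated exceptional component with $\rho_i=0$) cannot occur: an exceptional component always meets either another exceptional component or has positive excess, since otherwise the preimage of $\fa$ would not determine $F$ with $-F\cdot E_i>0$ somewhere needed for $\fM$-primaryness; more cleanly, a single isolated rational curve with $\rho_i=0$ would have to be the whole resolution, impossible for an $\fM$-primary $\fa$ as $\rho = \sum\rho_i > 0$. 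I would phrase this last point carefully, perhaps invoking $\rho>0$ and connectivity of the dual graph.

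For the second inequality, sum the formula of Lemma~\ref{num} over all components $E_i \leqslant H$ of a connected component $H$ of $H_c$:
$$\left(\lceil K_\pi - cF\rceil + H_c\right)\cdot H = \sum_{E_i \leqslant H}\left(-2 + T_i + a_{H_c}(E_i)\right) = -2 v_H + \sum_{E_i\leqslant H} T_i + 2a_H,$$
using $\sum_{E_i\leqslant H} a_{H_c}(E_i) = \sum_{E_i\leqslant H} a_H(E_i) = 2a_H$ (neighbours inside $H_c$ of a component of $H$ stay inside $H$, as $H$ is a connected component). Since $H$ is a tree of rational curves, $v_H - a_H = 1$, so $-2v_H + 2a_H = -2$, giving
$$\left(\lceil K_\pi - cF\rceil + H_c\right)\cdot H = -2 + \sum_{E_i \leqslant H} T_i.$$
Now $\sum_{E_i\leqslant H} T_i$ is a non-negative integer, so this is $\geqslant -2$; to get $\geqslant -1$ I must rule out $\sum_{E_i\leqslant H}T_i = 0$, i.e. $T_i = 0$ for every $E_i \leqslant H$. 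As above $T_i=0$ forces $\rho_i=0$ and forces every $E$-neighbour of $E_i$ to lie in $H_c$; running this over all $E_i\leqslant H$ and using that $H$ is a connected component of $H_c$ shows $H$ is a connected component of the entire exceptional divisor $E$, and moreover $\rho_i = 0$ for all its components. Since the exceptional divisor is connected and $\rho = \sum\rho_i>0$, this is impossible. Hence $\sum T_i \geqslant 1$ and the inequality follows.

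The main obstacle I anticipate is the bookkeeping around the degenerate cases $T_i = 0$: one must argue cleanly that a component (or connected component of $H_c$) on which all the $T_i$ vanish would be an isolated piece of the exceptional divisor with no dicritical component, and then derive a contradiction from connectivity of the exceptional dual graph together with $\rho>0$. Everything else is a direct substitution of Lemma~\ref{num}, the tree identity $v_H - a_H = 1$, and the integrality from Corollary~\ref{cor-integer}; I would keep those parts brief.
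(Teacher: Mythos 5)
Your proposal is correct and follows essentially the same route as the paper: both parts substitute Lemma~\ref{num}, use Corollary~\ref{cor-integer} to see the relevant sum is a non-negative integer, and rule out the boundary case by showing it would force an isolated (or whole) exceptional divisor with total excess zero, contradicting $\rho>0$ and the connectedness of the exceptional locus. The only cosmetic difference is that you bound $\sum T_i\geqslant 1$ directly where the paper argues ``strictly greater than $-2$ and integral, hence $\geqslant -1$''.
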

\begin{proof}
From Lemma \ref{num} we already know that $\left(\left\lceil K_\pi-cF\right\rceil + H_c\right)\cdot E_i  \geqslant -2$ for all $E_i\leqslant H_c$. If equality holds, then it must also hold
\begin{itemize}
\item $a_{H_c}\left(E_i\right)=0$, that is, $E_i$ is an isolated component in $H_c$,
\item $\left\{ce_j - k_j\right\} = 0$ for all $E_j \in \adj\left(E_i\right)$, that is, every exceptional component $E_j$ intersecting $E_i$ is also contained in $H_c$, and
\item $\rho_i=0$.
\end{itemize}
The first two conditions imply that $E_i$ is the only exceptional curve of the log-resolution. But in this case $\rho_i = \rho > 0$ and the third condition is not satisfied.

\vskip 2mm

As for the second part, using Lemma \ref{num} for all $E_i \leqslant H$ and summing up we obtain
\begin{align*}
\left(\left\lceil K_\pi- cF\right\rceil + H_c\right)\cdot H  & = -2v_H +\sum_{E_i \leqslant H} \left( \sum_{ E_j \in \adj(E_i)} \left\{ce_j - k_j\right\} + c\rho_i \right) + 2a_H \\
& = -2 + \sum_{E_i \leqslant H} \left( \sum_{ E_j \in \adj(E_i)}\left\{ce_j - k_j\right\}\right) + c \sum_{E_i \leqslant H} \rho_i \geqslant -2,
\end{align*}
where $a_H-v_H=1$ due to the tree structure of the exceptional divisor and the connectedness of $H$. Equality holds if and only if
$$\sum_{E_i \leqslant H} \sum_{ E_j\in \adj(E_i)} \left\{ce_j - k_j\right\}=\sum_{E_i \leqslant H} c\rho_i=0.$$
The first condition implies that $H$ is the whole exceptional divisor, and then the second condition implies that $\rho = 0$, which is impossible. Hence the inequality must be strict, and since $\left(\left\lceil K_\pi-cF\right\rceil + H_c\right)\cdot H\in\bZ$, the claim follows.
\end{proof}

We will now get some insight on the topology of the $H_c$.

\begin{theorem} \label{thm-ends-Hc}
Fix any $c \in \bR_{>0}$, and let $H_c$ be the corresponding maximal jumping divisor. Then:
\begin{itemize}
\item The isolated components of $H_c$ must be either a rupture divisor, a dicritical divisor or a divisor $E_i$ with $a\left(E_i\right) = 2$ such that $$\sum_{ E_j \in \adj(E_i)} \left\{c e_j -k_j\right\} = 1.$$
\item An end of a reducible connected component of $H_c$ must be either a rupture divisor, a dicritical divisor or an end of the whole exceptional divisor.
\end{itemize}
\end{theorem}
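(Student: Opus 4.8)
The plan is to read everything off the local intersection formula of Lemma~\ref{num}, combined with the integrality statement of Corollary~\ref{cor-integer} and the bound of Proposition~\ref{Num_condition_c}. The key observation is that for $E_j \in \adj(E_i)$ one has $\left\{ce_j-k_j\right\} = 0$ exactly when $E_j \leqslant H_c$, and $\left\{ce_j-k_j\right\} \in (0,1)$ otherwise; thus the fractional terms in Lemma~\ref{num} record precisely which neighbours of $E_i$ fail to lie in $H_c$.

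For the first bullet, let $E_i$ be an isolated component of $H_c$, so that $a_{H_c}(E_i)=0$. If $E_i$ is the unique exceptional component then $\rho_i = \rho > 0$ and $E_i$ is dicritical; hence assume $a(E_i) \geqslant 1$. If $\rho_i > 0$ then again $E_i$ is dicritical and we are done, so suppose $\rho_i = 0$. Since $E_i$ is isolated in $H_c$, none of its $a(E_i)$ neighbours lies in $H_c$, so each contributes a term in $(0,1)$; by Corollary~\ref{cor-integer} the sum $\sum_{E_j \in \adj(E_i)}\left\{ce_j-k_j\right\}$ is then a positive integer strictly smaller than $a(E_i)$, forcing $a(E_i) \geqslant 2$. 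If $a(E_i) \geqslant 3$ then $E_i$ is a rupture component; if $a(E_i) = 2$ the sum is a positive integer below $2$, hence equals $1$, which is the third alternative.

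For the second bullet, let $E_i$ be an end of a reducible connected component $H$ of $H_c$, and suppose for contradiction that it is neither a rupture component, nor dicritical, nor an end of the whole exceptional divisor. Being contained in a reducible connected component forces $a(E_i) \geqslant 1$; not being an end of $E$ gives $a(E_i) \geqslant 2$, and not being a rupture component gives $a(E_i) \leqslant 2$, so $a(E_i) = 2$. Since $E_i$ is an end of $H$ we have $a_H(E_i) = 1$, and since any exceptional curve in $H_c$ meeting $E_i$ necessarily lies in the same connected component of $H_c$ as $E_i$, in fact $a_{H_c}(E_i) = 1$; so exactly one neighbour $E_j$ of $E_i$ sits outside $H_c$ and contributes $\left\{ce_j-k_j\right\} \in (0,1)$. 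Feeding $\rho_i = 0$ and $a_{H_c}(E_i) = 1$ into Lemma~\ref{num} gives
$$\left(\left\lceil K_{\pi}-cF\right\rceil + H_c\right)\cdot E_i = -1 + \left\{ce_j-k_j\right\} \in (-1,0),$$
which is impossible, since the left-hand side is an intersection number of integral divisors, hence an integer (this also contradicts Proposition~\ref{Num_condition_c}).

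I do not anticipate a genuine obstacle here: the only care needed is with the degenerate configurations (the exceptional divisor being a single curve, or $a(E_i)$ being too small to allow a rupture point) and with the identity $a_{H_c}(E_i) = a_H(E_i)$ for $E_i \leqslant H$, which holds because a component of $H_c$ adjacent to $E_i$ lies in the same connected component of $H_c$ as $E_i$. Everything else is bookkeeping with the formula of Lemma~\ref{num}.
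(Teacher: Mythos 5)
Your proof is correct and follows essentially the same route as the paper's: both arguments read the two claims directly off Lemma \ref{num}, combined with the integrality of Corollary \ref{cor-integer} and the bound of Proposition \ref{Num_condition_c}. The only cosmetic difference is that you dispose of the degenerate subcases (a single neighbour, or the two fractional parts summing to $0$) via strict positivity and integrality of the fractional terms, where the paper invokes Proposition \ref{Num_condition_c}; this is the same computation.
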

\begin{proof}
Let $E_i$ be an isolated component of $H_c$. Assume that it is neither a rupture nor a dicritical component.
Then it only has one or two adjacent components in the exceptional divisor. In the first case, if $E_j$ is the only exceptional
component in $\adj\left(E_i\right)$, then the formula given in Lemma \ref{num} reduces to
$\left(\left\lceil K_\pi-c F\right\rceil + H_c\right)\cdot E_i= -2 + \left\{ c e_j - k_j\right\}$.
Since $\left\{ c e_j - k_j\right\}<1$, we would get $\left(\left\lceil K_\pi-c F\right\rceil + H_c\right)\cdot E_i < -1$,
contradicting Proposition \ref{Num_condition_c}. The only possible remaining case is $a\left(E_i\right)=2$.
If $\adj\left(E_i\right)=\left\{E_j,E_l\right\}$, then we have
$\left(\left\lceil K_\pi-c F\right\rceil + H_c\right)\cdot E_i= -2 + \left\{ c e_j - k_j\right\} + \left\{c e_l - k_l\right\}$. Since
$$0\leqslant\left\{ c e_j - k_j\right\} + \left\{c e_l - k_l\right\} < 2$$
must be an integer by Corollary \ref{cor-integer} (we assumed $E_i$ to be non-dicritical, i.e. $\rho_i=0$), it must equal 0 or 1.
But the former contradicts Proposition \ref{Num_condition_c}, hence the only possibility is that
$\left\{ c e_j - k_j\right\} + \left\{c e_l - k_l\right\} = 1$, which is the last possibility given in the statement.

\vskip 2mm

As for the second assertion, let $E_i$ be an end of a reducible connected component of $H_c$ that is neither a rupture divisor,
nor a dicritical divisor nor an end of the whole exceptional divisor. Then it has two adjacent components in the whole exceptional
divisor, say  $E_j$ and $E_l$, but only one of them, say $E_j$, is in $H_c$. Then we have
$$\left(\left\lceil K_\pi-c F\right\rceil + H_c\right)\cdot E_i= -2 + \left\{c e_l -k_l\right\} +1 \not \in \bZ,$$
which is impossible.
\end{proof}

There are examples where any of these cases is achieved, in particular we may find isolated components of $H_c$ that are
neither a rupture nor a dicritical divisor.

\begin{example}
Consider the ideal $\A=(x^3,y^{10}) \subseteq \bC\{x,y\}$. Its minimal log-resolution
has six exceptional components $E_1,\ldots,E_6$ indexed according to the order in which they are obtained by successive blow-ups.
They are arranged as the following dual graph shows

\vskip 2mm
\begin{center}
\begin{tikzpicture}[scale=0.9]
  \draw  (0,0) -- (5,0);
  \draw [dashed,->,thick] (3,0) -- (3.5,0.5);
  \draw (-.2,-0.3) node {{\tiny $E_1$}};
  \draw (.8,-0.3) node {{\tiny $E_2$}};
  \draw (1.8,-0.3) node {{\tiny $E_3$}};
  \draw (4.8,-0.3) node {{\tiny $E_4$}};
  \draw (3.8,-0.3) node {{\tiny $E_5$}};
  \draw (2.8,-0.3) node {{\tiny $E_6$}};
  \filldraw  (0,0) circle (2pt)
             (1,0) circle (2pt)
             (2,0) circle (2pt)
             (3,0) circle (2pt)
             (4,0) circle (2pt)
             (5,0) circle (2pt);
\end{tikzpicture}
\end{center}

\vskip 2mm
\noindent where the dashed arrow indicates that $E_6$ is the only dicritical component, with excess $\rho_6=1$.
The relative canonical divisor is $K_\pi= E_1+2E_2+3E_3+4E_4+8E_5+12E_6$ and the divisor $F$ such that
$\fa\cdot\cO_{X'} = \cO_{X'}\left(-F\right)$ is $F= 3E_1+6E_2+9E_3+10E_4+20E_5+30E_6$.

\vskip 2mm The maximal jumping divisor associated to $c=\frac{3}{2}$ is $H_{\frac{3}{2}}= E_2+E_4+E_5+E_6$.
It has two connected components, one of which ($E_2$) is as predicted at the first statement of Theorem \ref{thm-ends-Hc}.

\end{example}


\section{Multiplicities of Jumping Numbers}

Let $\fa \subseteq \cO_{X,O}$ be an $\fM$-primary ideal. The aim of this section
is to describe the multiplicity
$$m(c)= \dim_{\bC} \frac{\J(\A^{c-\varepsilon})}{ \J(\A^{c})}$$
for any real exponent $c>0$, where $\varepsilon$ is small enough.
In Theorem \ref{multiplicity1_c} we will give a formula
described in terms of the maximal jumping divisor associated
to $c$. This formula and  Proposition \ref{growth} will be  the key ingredients for
 the description of the Poincar\'e series
associated to $\fa$  that we will give in Theorem \ref{serie_Poincare}.

\vskip 2mm

We will also provide a second formula for the multiplicity in Proposition \ref{virtual2} that is based
on the concept of {\it virtual codimension} considered by Casas-Alvero \cite{Cas00}
and Reguera \cite{Reg96} for the smooth  and  the rational singularities case respectively.

\vskip 2mm

We start with the first formula.

\begin{theorem}\label{multiplicity1_c}
Let $\fa \subseteq \cO_{X,O}$ be an $\fM$-primary ideal and $H_c$
the maximal jumping divisor associated to some $c \in \bR_{>0}$. Then,
\begin{align*}
m\left(c\right) & = \left(\left\lceil K_{\pi}-cF\right\rceil +
H_c\right)\cdot H_c +\#\left\{\text{connected components of }
H_c\right\}.
\end{align*}
\end{theorem}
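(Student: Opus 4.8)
The plan is to compute the dimension drop $m(c) = \dim_{\bC} \J(\A^{c-\varepsilon})/\J(\A^c)$ by relating it to an Euler characteristic computation on the log-resolution $X'$, exploiting the local vanishing property. Write $D_c = \lceil K_\pi - cF\rceil$, so that $\J(\A^c) = \pi_*\cO_{X'}(D_c)$ and, by the definition of the maximal jumping divisor, $\J(\A^{c-\varepsilon}) = \pi_*\cO_{X'}(D_c + H_c)$ (since $\lceil K_\pi - (c-\varepsilon)F\rceil = D_c + H_c$ for $\varepsilon$ small). The key point is that by local vanishing $R^1\pi_*\cO_{X'}(D_c) = 0$ and also $R^1\pi_*\cO_{X'}(D_c + H_c) = 0$; the latter holds because $D_c + H_c = \lceil K_\pi - (c-\varepsilon)F\rceil$ is again of the form $\lceil K_\pi - c'F\rceil$. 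Consequently, pushing forward the short exact sequence
\begin{equation*}
0 \to \cO_{X'}(D_c) \to \cO_{X'}(D_c + H_c) \to \cO_{H_c}(D_c + H_c) \to 0
\end{equation*}
yields
\begin{equation*}
0 \to \pi_*\cO_{X'}(D_c) \to \pi_*\cO_{X'}(D_c + H_c) \to \pi_*\cO_{H_c}\big((D_c + H_c)|_{H_c}\big) \to 0,
\end{equation*}
so that $m(c) = \dim_{\bC} H^0\big(H_c, \cO_{H_c}((D_c + H_c)|_{H_c})\big)$ (the pushforward is just global sections since $H_c$ is proper over a point).

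Next I would compute this $H^0$ via Riemann--Roch on the (possibly disconnected, nodal but reduced) curve $H_c$. Because the exceptional divisor is a tree of rational curves (rational singularity), each connected component of $H_c$ is a tree of $\bP^1$'s, hence has arithmetic genus zero; so for a line bundle $L$ on such a connected component $H$ of degree $\deg(L|_{H}) = (D_c + H_c)\cdot H$ we have $\chi(H, L) = \deg(L|_H) + 1$. The decisive input is Proposition \ref{Num_condition_c}, which gives $(D_c + H_c)\cdot H \geq -1$ for every connected component $H$ of $H_c$; combined with the fact that $H$ is a tree of rational curves and $L|_H$ restricted to each component $E_i \leq H$ has degree $(D_c + H_c)\cdot E_i \geq -1$ (again Proposition \ref{Num_condition_c}), one gets $H^1(H, L) = 0$. (This is the standard vanishing on a nodal curve of genus zero whose line bundle has degree $\geq -1$ on every component; one can prove it by induction on the number of components of $H$, peeling off an end $E_i$ and using the exact sequence $0 \to L(-E_i\cap(H-E_i)) \to L \to L|_{E_i} \to 0$ — here the tree structure guarantees $E_i$ meets $H - E_i$ in exactly one point.) Therefore $h^0(H, L) = \chi(H, L) = (D_c + H_c)\cdot H + 1$ for each connected component $H$, and summing over all connected components of $H_c$ gives
\begin{equation*}
m(c) = \sum_{H} \big((D_c + H_c)\cdot H + 1\big) = (D_c + H_c)\cdot H_c + \#\{\text{connected components of } H_c\},
\end{equation*}
which is exactly the claimed formula.

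The main obstacle I anticipate is the $H^1$-vanishing on the nodal curve $H_c$: one must be careful that Proposition \ref{Num_condition_c} controls both the degree on each irreducible component \emph{and} the total degree on each connected component, and that the tree structure is genuinely used so that the inductive peeling-off argument never produces a component on which the (twisted-down) bundle has degree $\leq -2$. A secondary, more bookkeeping-type point is to justify carefully that $D_c + H_c = \lceil K_\pi - (c-\varepsilon)F\rceil$ for $\varepsilon$ sufficiently small — this is immediate from Definition \ref{df-maximal-jumping} — so that local vanishing applies to the larger multiplier ideal as well; without this one would only get a surjection rather than the needed short exact sequence on global sections. Everything else (Riemann--Roch on genus-zero nodal curves, the additivity of $\chi$ and of $\deg$ over connected components) is routine.
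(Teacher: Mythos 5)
Your reduction of $m(c)$ to $h^0\left(H_c,L\right)$ with $L=\cO_{H_c}\left(\left\lceil K_\pi-cF\right\rceil+H_c\right)$ is exactly the paper's first step: the same short exact sequence, the same use of local vanishing, and the same identification $\left\lceil K_\pi-(c-\varepsilon)F\right\rceil=\left\lceil K_\pi-cF\right\rceil+H_c$. Your second step (Riemann--Roch on each connected component, which has arithmetic genus zero, plus an $H^1$-vanishing) is an equivalent repackaging of the paper's bookkeeping, which instead writes $h^0(H_c,L)=\sum_i h^0(E_i,L|_{E_i})-a_{H_c}$ and uses $h^0(\bP^1,\cO(d))=d+1$ for $d\geqslant -1$. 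Either way, the whole theorem is equivalent to $H^1(H_c,L)=0$, so that is the step that must be nailed down.

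The gap is precisely where you anticipated it: the vanishing criterion you invoke is false. On a tree of $\bP^1$'s, ``degree $\geqslant -1$ on every irreducible component and degree $\geqslant -1$ on every connected component'' does not imply $H^1=0$. Take a chain of three components $E_1,E_2,E_3$ (with $E_1\cdot E_2=E_2\cdot E_3=1$) and a line bundle of degrees $(-1,-1,3)$: every component has degree $\geqslant -1$ and the total degree is $1$, yet $h^0=3$ while $\chi=2$, so $h^1=1$; your peeling argument dies exactly as you feared, producing degree $-2$ on $E_2$. The correct criterion involves the degrees on \emph{all} connected subcurves, and the input that excludes such configurations for $H_c$ is not Proposition \ref{Num_condition_c} but the full strength of Lemma \ref{num}, which you never use: since $c\rho_i\geqslant 0$ and the fractional parts are nonnegative, $\deg L|_{E_i}\geqslant a_{H_c}(E_i)-2$, i.e.\ a component with $k$ neighbours inside $H_c$ has degree at least $k-2$. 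With this the vanishing follows, e.g.\ by duality: for a connected component $H$, the sheaf $M=\omega_H\otimes L^{-1}$ has $\deg M|_{E_i}=a_H(E_i)-2-\deg L|_{E_i}\leqslant 0$ on every component and total degree $-2-\deg L|_H<0$ by Proposition \ref{Num_condition_c}, so connectedness forces $H^0(H,M)=0$ and hence $H^1(H,L)=0$. (In fairness, the paper's proof also asserts its gluing count $\sum_i h^0(E_i,L|_{E_i})-a_{H_c}$ without isolating this surjectivity, which is the same issue; your instinct that this is the delicate point is sound, but the lemma you state to resolve it is not true, and the fix genuinely requires Lemma \ref{num}.)
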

\begin{proof}
Consider the short exact sequence
$$0 \longrightarrow \Oc_{X'}\left(\left\lceil K_\pi-cF\right\rceil\right) \longrightarrow \Oc_{X'}\left(\left\lceil K_\pi-cF\right\rceil + H_c\right) \longrightarrow \Oc_{H_c}\left(\left\lceil K_\pi-cF\right\rceil+H_c\right) \longrightarrow 0$$
Pushing it forward to $X$ and applying local vanishing for multiplier ideals we get the
short exact sequence
\begin{multline*}
0 \longrightarrow \pi_*\Oc_{X'}\left(\left\lceil K_\pi-cF\right\rceil\right) \longrightarrow \pi_*\Oc_{X'}\left(\left\lceil K_\pi-cF\right\rceil+H_c\right) \longrightarrow \\
\longrightarrow H^0\left(H_c,\Oc_{H_c}\left(\left\lceil
K_\pi-cF\right\rceil+H_c\right)\right) \otimes \bC_O \longrightarrow
0
\end{multline*}
or equivalently, since $H_c = \left\lceil
K_\pi-\left(c-\varepsilon\right)F\right\rceil-\left\lceil
K_\pi-cF\right\rceil$ for $\varepsilon$ small
enough,
$$0 \longrightarrow\J(\A^c) \longrightarrow\J(\A^{\left(c-\varepsilon\right)}) \longrightarrow H^0\left(H_c,\Oc_{H_c}\left(\left\lceil K_\pi-cF\right\rceil+H_c\right)\right) \otimes \bC_O \longrightarrow 0$$
Therefore the multiplicity of $c$ is just
\begin{align*}
m\left(c\right) & = h^0\left(H_c,\Oc_{H_c}\left(\left\lceil K_\pi-cF\right\rceil+H_c\right)\right) \\
& = \sum_{E_i \leqslant H_c} h^0\left(E_i,\Oc_{E_i}\left(\left\lceil
K_\pi-cF\right\rceil+H_c\right)\right) -  a_{H_c},
\end{align*}
where in the second equality we have used that $H_c$ has simple
normal crossings, and hence the sections of the line bundle
$\Oc_{H_c}\left(\left\lceil K_\pi-cF\right\rceil+H_c\right)$
correspond to sections over each component that agree on the
$a_{H_c}$ intersections. Indeed, we can consider the twist by
$\mathcal{O}_{X'}\left(\left\lceil K_\pi-cF\right\rceil+H_c\right)$
of the following exact sequence
$$0 \longrightarrow \mathcal{O}_{H_c} \longrightarrow \bigoplus_{E_i \leqslant H_c} \mathcal{O}_{E_i} \longrightarrow \bigoplus_{E_i,E_j \leqslant H_c} \mathcal{O}_{E_i \cap E_j} \longrightarrow 0,$$
where the summands in the last term are length-one skyscraper sheaves (due to the simple normal crossings condition), the first map is the direct sum of the restrictions $\mathcal{O}_{H_c} \rightarrow \mathcal{O}_{E_i}$ and the second map is given by the differences at the intersections $E_i \cap E_j$.

Recall now that each exceptional component $E_i$ is isomorphic to
$\bP^1$, and that the sections of a line bundle on $\bP^1$ are
determined by its degree (namely,
$h^0\left(\Oc_{\bP^1}\left(d\right)\right)= d+1$ if $d\geqslant -1$ and
zero otherwise). Then, using that
$$\deg \Oc_{E_i}\left(\left\lceil K_\pi-cF\right\rceil+H_c\right) = \left(\left\lceil K_\pi-cF\right\rceil+H_c\right)\cdot E_i \geqslant -1$$
by Proposition \ref{Num_condition_c}, we get
\begin{align*}
m\left(c\right) & = \sum_{E_i \leqslant H_c}\left(\left(\left\lceil K_\pi-cF\right\rceil+H_c\right)\cdot E_i + 1\right) -  a_{H_c} \\
& = \left(\left\lceil K_\pi-cF\right\rceil+H_c\right)\cdot H_c + v_{H_c} -  a_{H_c} \\
& = \left(\left\lceil K_\pi-cF\right\rceil+H_c\right)\cdot H_c +
\#\left\{\text{connected components of } H_c\right\}.
\end{align*}
\end{proof}

\begin{remark}
When $c=\lambda$ is a jumping number, the same formula for the
multiplicity can be described using the associated minimal jumping
divisor $G_\lambda$. Namely,
\begin{align*}
m(\lambda) & = \left(\left\lceil K_\pi-\lambda F\right\rceil +
G_{\lambda}\right)\cdot G_{\lambda} +\#\{\text{connected components
of } G_{\lambda}\}
\end{align*}
The proof of this result holds verbatim to the one given for Theorem \ref{multiplicity1_c}
but we have to refer to \cite[Proposition 4.16]{ACAMDC13} instead of Proposition \ref{Num_condition_c}.

\vskip 2mm

For reduced divisors in the interval $G_{\lambda}< G < H_{\lambda}$
we may have $E_i\leqslant G$ such that
$$\left(\left\lceil K_\pi-\lambda F\right\rceil+G\right)\cdot E_i=-2 + \sum_{ E_j \in \adj(E_i)} \left\{\lambda e_j-k_j\right\}  + \lambda \rho_i +a_{G}(E_i)= -2.$$
Namely, this happens when $E_i$ is a non-dicritical isolated
component of $G$ with all adjacent divisors in $H_\lambda$. However,
these divisors can also provide a formula for the multiplicity of a
jumping number as follows. Refining the arguments used in the proof
of Theorem \ref{multiplicity1_c} we obtain:
$$m(\lambda) = \left(\left\lceil K_\pi-\lambda F\right\rceil+G\right)\cdot G +\#\{\text{c.c. of } G\}  +\#\left\{ E_i \hskip 2mm | \hskip 2mm \left(\left\lceil K_\pi-\lambda F\right\rceil+G\right)\cdot E_i=-2\right\}.$$
\end{remark}

In some cases it will be more convenient to use the following
reinterpretation of the formula given in Theorem
\ref{multiplicity1_c}.

\begin{corollary} \label{multiplicity2}
Let $\fa \subseteq \cO_{X,O}$ be an $\fM$-primary ideal and $H_c$
the maximal jumping divisor associated to some $c \in \bR_{>0}$. Then,

\begin{align*}
m\left(c\right)  & = \sum_{E_i \leqslant H_c} \left( \sum_{ E_j \in
\adj(E_i)} \left\{c e_j -
k_j\right\}+c\rho_i\right)-\#\left\{\text{connected components of }
H_{c}\right\}.
\end{align*}

\end{corollary}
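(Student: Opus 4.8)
The plan is to derive this directly from Theorem \ref{multiplicity1_c} by rewriting the intersection number $\left(\left\lceil K_\pi - cF\right\rceil + H_c\right)\cdot H_c$ componentwise. First I would expand the intersection as a sum over the irreducible components $E_i \leqslant H_c$, namely
$$\left(\left\lceil K_\pi - cF\right\rceil + H_c\right)\cdot H_c = \sum_{E_i \leqslant H_c} \left(\left\lceil K_\pi - cF\right\rceil + H_c\right)\cdot E_i,$$
and then substitute the formula from Lemma \ref{num} for each term. This gives
$$\left(\left\lceil K_\pi - cF\right\rceil + H_c\right)\cdot H_c = \sum_{E_i \leqslant H_c}\left(-2 + c\rho_i + a_{H_c}(E_i) + \sum_{E_j \in \adj(E_i)}\left\{ce_j - k_j\right\}\right).$$

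Next I would simplify the purely combinatorial part of this sum. The terms $\sum_{E_i \leqslant H_c}(-2) = -2 v_{H_c}$ and $\sum_{E_i \leqslant H_c} a_{H_c}(E_i) = 2a_{H_c}$ (the latter being the identity recorded in the Preliminaries, valid because the exceptional set is a tree of rational curves, so each intersection point of $H_c$ is counted exactly twice). Hence $\sum_{E_i \leqslant H_c}(-2 + a_{H_c}(E_i)) = -2v_{H_c} + 2a_{H_c} = -2(v_{H_c} - a_{H_c})$, and $v_{H_c} - a_{H_c}$ equals the number of connected components of $H_c$, again by the tree structure. Plugging this back in:
$$\left(\left\lceil K_\pi - cF\right\rceil + H_c\right)\cdot H_c = -2\,\#\{\text{c.c. of }H_c\} + \sum_{E_i \leqslant H_c}\left(c\rho_i + \sum_{E_j \in \adj(E_i)}\left\{ce_j - k_j\right\}\right).$$

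Finally I would add $\#\{\text{connected components of }H_c\}$ to both sides, as dictated by Theorem \ref{multiplicity1_c}, and the $-2\,\#\{\text{c.c.}\} + \#\{\text{c.c.}\} = -\#\{\text{c.c.}\}$ collapses to the single subtracted term in the statement, yielding exactly
$$m(c) = \sum_{E_i \leqslant H_c}\left(\sum_{E_j \in \adj(E_i)}\left\{ce_j - k_j\right\} + c\rho_i\right) - \#\{\text{connected components of }H_c\}.$$
There is essentially no obstacle here: the whole argument is bookkeeping, and the only point requiring a moment's care is invoking the two tree identities ($\sum a_{H_c}(E_i) = 2a_{H_c}$ and $v_{H_c} - a_{H_c} = \#\{\text{c.c.}\}$) for the subdivisor $H_c$ rather than for the whole exceptional divisor — but these hold for any reduced exceptional divisor since a subforest of a forest is a forest, so they apply verbatim.
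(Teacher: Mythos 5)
Your proposal is correct and follows exactly the paper's own argument: apply Theorem \ref{multiplicity1_c}, expand the intersection with $H_c$ componentwise via Lemma \ref{num}, and use the tree identities $\sum_{E_i \leqslant H_c} a_{H_c}(E_i) = 2a_{H_c}$ and $v_{H_c} - a_{H_c} = \#\{\text{connected components of } H_c\}$ to collapse the combinatorial terms. Nothing to add.
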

\begin{proof}
Using Lemma \ref{num} we have:
\begin{align*}
m(c) & = \left(\left\lceil K_\pi-c F\right\rceil+H_{c}\right)\cdot H_c + \#\left\{\text{connected components of } H_c\right\} \\
 & = \sum_{E_i \leqslant H_c} \left(-2 + \sum_{E_j \in \adj(E_i)} \left\{c e_j - k_j\right\} + c \rho_i + a_{H_c}\left(E_i\right)\right) + \#\left\{\text{c.c. of } H_c\right\} \\
 & = -2v_H + \sum_{E_i \leqslant H_c} \left(\sum_{E_j \in \adj(E_i)} \left\{c e_j - k_j\right\} + c \rho_i \right) + 2a_{H_c}  + \#\left\{\text{c.c. of } H_c\right\} \\
 & = \sum_{E_i \leqslant H_c} \left(\sum_{E_j \in \adj(E_i)} \left\{c e_j - k_j\right\} + c \rho_i \right) - \#\left\{\text{c.c. of } H_{c}\right\} \\
\end{align*}
\end{proof}

As an immediate consequence of this we obtain the following
slight generalization of a result of Tucker \cite[Proposition 7.3]{Tuc10}.
We point out that J\"arviletho already proved in \cite{Jar11} that
$1$ is not a jumping number for simple $\fM$-primary ideals.

\begin{corollary}
Suppose that $O$ is a smooth point, and let $\fa \subseteq \cO_{X,O}$ be an $\fM$-primary ideal. The
multiplicity of $c=1$ is $$m(1)= \rho -1.$$ In particular, $c=1$ is
a jumping number if and only if $\fa$ is not simple.
\end{corollary}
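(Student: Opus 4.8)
The plan is to evaluate the formula of Corollary~\ref{multiplicity2} at $c=1$, so the first task is to identify the maximal jumping divisor $H_1$. Since $O$ is a smooth point, any log-resolution $\pi$ of $\fa$ is a composition of blow-ups at closed (possibly infinitely near) points, so the relative canonical divisor $K_\pi=\sum_i k_i E_i$ has positive \emph{integral} coefficients; the coefficients $e_i$ of $F$ are positive integers as well. Hence $k_i-1\cdot e_i\in\bZ$ for every $i$, and by Definition~\ref{df-maximal-jumping} every exceptional component lies in $H_1$, i.e.\ $H_1=E$ is the whole reduced exceptional divisor. (Equivalently, one may use the periodicity $H_1=H_0$ of Lemma~\ref{periodic_jd} together with the fact that $H_0=E$ precisely because all the $k_i$ are integers.)

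Substituting $H_1=E$ into Corollary~\ref{multiplicity2} makes the formula collapse: every $e_j-k_j$ is an integer, so all the fractional parts $\left\{e_j-k_j\right\}$ vanish, and the exceptional divisor of a resolution of a point on a smooth surface is a connected tree of rational curves, so $H_1=E$ has a single connected component. Therefore
$$m(1)=\sum_{E_i\leqslant E}\rho_i-1=\rho-1,$$
using the definition $\rho=\sum_i\rho_i$ of the total excess. (The same value follows from Theorem~\ref{multiplicity1_c}: one has $\left(\lceil K_\pi-F\rceil+E\right)\cdot E+1=\left(K_\pi+E\right)\cdot E-F\cdot E+1=-2+\rho+1$, where $\left(K_\pi+E\right)\cdot E=-2$ by adjunction and the tree structure of $E$, and $F\cdot E=-\sum_i\rho_i=-\rho$ by the definition of excess.)

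For the last assertion, $c=1$ is a jumping number exactly when $m(1)=\rho-1>0$, that is, when $\rho\geqslant 2$. Recall from the Preliminaries that $\rho$ is the number of branches at $O$ of a general curve in the linear system defined by $\fa$, and that by Zariski's theory of complete ideals in a two-dimensional regular local ring (see e.g.\ \cite{Cas00}) this general curve is analytically irreducible if and only if $\fa$ is simple; equivalently, $\rho=1$ if and only if $\fa$ is simple. Hence $1$ is a jumping number if and only if $\fa$ is not simple, which in particular recovers---quantitatively---J\"arviletho's observation in \cite{Jar11} that $1$ is never a jumping number of a simple $\fM$-primary ideal, since in that case $m(1)=\rho-1=0$. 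I do not expect a real obstacle here: with the machinery developed above in hand the computation is immediate, and the only external ingredient is the classical description of simple (complete) $\fM$-primary ideals in a regular two-dimensional local ring through the number of branches of a generic member---everything else is bookkeeping with fractional parts and with the tree structure of $E$.
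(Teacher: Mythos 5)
Your proof is correct and follows essentially the same route as the paper: identify $H_1=E$ (the paper phrases this as "$H_1$ has the same support as $F$", which is the whole exceptional divisor since all $e_i>0$) and then evaluate Corollary~\ref{multiplicity2}, whose fractional parts vanish because $K_\pi$ and $F$ are integral at a smooth point. The extra verifications you supply (connectedness of $E$, the adjunction computation, and the classical fact that $\fa$ is simple iff $\rho=1$) are exactly the details the paper leaves implicit.
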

\begin{proof}
The maximal jumping divisor for $c=1$ has the same support as $F$, so the result
follows from Corollary \ref{multiplicity2}.
\end{proof}

From the formula given above and the periodicity of the maximal jumping
divisor $H_{c}$, it is easy to control the growth of the
multiplicities in terms of the excesses at dicritical components.
This result is a key point in the proof of Theorem
\ref{serie_Poincare}.

\begin{proposition} \label{growth}
Let $\fa \subseteq \cO_{X,O}$ be an $\fM$-primary ideal and $H_c$
the maximal jumping divisor associated to some $c \in \bR_{>0}$. Then,
$$m\left(c+1\right)- m\left(c\right) = \sum_{E_i \leqslant H_c} \rho_i.$$
In particular, $0 \leqslant m\left(c+1\right) - m\left(c\right) \leqslant
\rho$.
\end{proposition}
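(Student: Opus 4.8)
The plan is to use the explicit formula from Corollary \ref{multiplicity2} together with the periodicity of the maximal jumping divisor established in Lemma \ref{periodic_jd}. Since $H_{c+1}=H_c$, the number of connected components is the same for both, so subtracting the two expressions
$$m(c+1)=\sum_{E_i\leqslant H_c}\Bigl(\sum_{E_j\in\adj(E_i)}\{(c+1)e_j-k_j\}+(c+1)\rho_i\Bigr)-\#\{\text{c.c. of }H_c\}$$
and the analogous one for $m(c)$ cancels the combinatorial term. Because $e_j\in\bZ$, we have $\{(c+1)e_j-k_j\}=\{ce_j-k_j\}$, so all the fractional-part summands cancel as well, and the only surviving contribution is $\sum_{E_i\leqslant H_c}\bigl((c+1)\rho_i-c\rho_i\bigr)=\sum_{E_i\leqslant H_c}\rho_i$. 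This proves the equality.

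For the final inequality, the lower bound $m(c+1)-m(c)\geqslant 0$ is immediate since each excess $\rho_i\geqslant 0$ by \eqref{eq-excess}. For the upper bound, note that the sum $\sum_{E_i\leqslant H_c}\rho_i$ runs only over the components of $H_c$, so it is at most $\sum_{i=1}^r\rho_i=\rho$, the total excess. Hence $0\leqslant m(c+1)-m(c)\leqslant\rho$.

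I do not expect any serious obstacle here: the statement is a direct algebraic consequence of the already-established multiplicity formula and the periodicity lemma, the only two facts that need to be invoked. The one point to be slightly careful about is that the cancellation of the fractional parts genuinely uses $e_j\in\bZ$ (so that shifting $c$ by $1$ shifts $ce_j$ by the integer $e_j$) and that $k_j$ is the same in both expressions; both are clear from the setup. It is worth remarking that, combined with Lemma \ref{periodic_jd}, this already encodes the eventual quasi-polynomial (in fact eventually linear, by steps of size in $\{0,1,\dots,\rho\}$) behaviour of $m(c)$ along the arithmetic progression $c,c+1,c+2,\dots$, which is exactly what feeds into the rationality of the Poincaré series in Section \S6.
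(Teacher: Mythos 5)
Your proof is correct and follows essentially the same route as the paper: the paper subtracts the formula of Theorem \ref{multiplicity1_c} at $c+1$ and $c$, using Lemma \ref{periodic_jd} and the integrality of the coefficients of $F$ to get $m(c+1)-m(c)=-F\cdot H_c$, while you perform the identical cancellation through the equivalent formula of Corollary \ref{multiplicity2}. Both arguments rest on exactly the same two facts, so there is nothing to add.
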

\begin{proof}
Recall that $c$ and $c + 1$ have the same jumping divisor $H_{c}$
(see Lemma \ref{periodic_jd}). Therefore, by Theorem
\ref{multiplicity1_c}, we have
$$m\left(c+1\right)- m\left(c\right)= -F\cdot H_{c}= \sum_{E_i \leqslant H_c} \rho_i.$$
\end{proof}


\subsection{Virtual codimensions}
Given an effective $\bR$-divisor $D=\sum d_i E_i$ with exceptional support  we may consider its
associated ideal (sheaf) $\pi_{\ast}\Oc_{X'}(-D):= \pi_{\ast}\Oc_{X'}(-\lceil D \rceil)$. Its stalk at $O$ is
an $\fM$-primary complete ideal of $\Oc_{X,O}$ that we will simply denote as $I_D$.
We say that  two divisors are {\it equivalent} if they  define the same ideal.  In the equivalence class of a given divisor $D$ one
may find
a unique maximal representative, its so-called {\it antinef closure}   $\widetilde{D}$ (see \cite[\S18]{Lip69}).
First, recall that an effective divisor with integer coeficients ${D'}$   is called
\emph{antinef} if $-{D'}\cdot E_i \geqslant 0$,  for every exceptional prime divisor
$E_i$.

 \vskip 2mm
 The antinef closure of $D$ can be computed using an inductive procedure
 called {\it unloading} that was already described
in the work of Enriques \cite[IV.II.17]{EC15} (see also \cite{Lau72}, \cite[\S 4.6]{Cas00} and \cite{Reg96}).
Here we will consider the version given by the first three authors in \cite{ACAMDC13}. Unloading values to any $D$ is
to consider the new divisor $$D'= \lceil D \rceil + \sum_{E_i \in
\Theta} n_i E_i,$$ where $\Theta$ is the set of components $E_i
\leqslant D$ with negative excesses, i.e.
$$\Theta:= \{E_i \leqslant D \hskip 2mm | \hskip 2mm  \rho_i=
-\lceil D \rceil \cdot E_i <0 \}$$ and $n_i= \left \lceil \frac
{\rho_i}{E_i^2} \right \rceil$. We say that the unloading is {\it
tame} if $\rho_i=-1$ for all $E_i\in \Theta$ and there are no
adjacent divisors in $\Theta$. This is a mild generalization of the
notion of tameness introduced in \cite{Cas00}. The antinef closure $\widetilde{D}$ of  $D$ is achieved
after finitely many unloading steps.

 \vskip 2mm

Given a  divisor  $D$ with exceptional support, we will define its {\it virtual codimension} or {\it virtual number of conditions} as

$$\mathcal{C}(D):=  -\frac{\lceil D\rceil \cdot(\lceil D\rceil+K_\pi)}{2}.$$

The main feature of this invariant is that it coincides with the codimension of the associated ideal when $D$ is antinef.
For a proof of this result one may consult \cite[Proposition 4.7.1]{Cas00} for the smooth case  and   \cite[Proposition 3.7]{Reg96}
for the rational singularities case.

\begin{proposition} \label{virtual codim is codim}
 Let $D $ be an antinef divisor and $I_{D}$ its associated ideal. Then:
 $$\mathcal{C}(D)= \dim_{\bC} \cO_{X,O}/I_{D}$$
\end{proposition}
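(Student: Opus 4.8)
The plan is to prove Proposition~\ref{virtual codim is codim} by induction on the number of unloading steps needed to reach the antinef closure, but since $D$ is already assumed antinef this reduces to a direct cohomological computation. First I would recall that for an antinef divisor $D$ with integral coefficients one has $\pi_*\Oc_{X'}(-D) = I_D$ by definition, and the key point is that $R^1\pi_*\Oc_{X'}(-D) = 0$ for $D$ antinef; this is a standard consequence of the rationality of the singularity together with the Grauert--Riemenschneider-type vanishing available in this setting (it can be extracted from \cite{Lip69} or deduced from the local vanishing property quoted in the Preliminaries applied to a suitable $F$, since antinef divisors are precisely those of the form $F$ for an $\fM$-primary ideal). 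Granting this vanishing, the strategy is to compute the Euler characteristic $\chi(\Oc_D(-D))$ on the exceptional curve $D$ and to identify it with $\mathcal{C}(D)$.

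The main computation runs as follows. From the short exact sequence
$$0 \longrightarrow \Oc_{X'}(-D) \longrightarrow \Oc_{X'} \longrightarrow \Oc_D \longrightarrow 0,$$
pushing forward to $X$ and using $\pi_*\Oc_{X'} = \Oc_{X,O}$, $\pi_*\Oc_{X'}(-D) = I_D$, $R^1\pi_*\Oc_{X'} = 0$ (rational singularity) and $R^1\pi_*\Oc_{X'}(-D) = 0$ (antinef), one obtains
$$\dim_{\bC} \Oc_{X,O}/I_D = h^0(D, \Oc_D) = \chi(\Oc_D),$$
the last equality because $H^1(D,\Oc_D) = 0$ for $D$ a connected (or disjoint union of) tree(s) of rational curves with $p_a \le 0$—indeed one shows $H^1(D,\Oc_D)=0$ by the same inductive adjunction argument used in the Preliminaries to see $p_a(Z)\le 0$. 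It then remains to compute $\chi(\Oc_D)$. Using the exact sequence relating $\Oc_D$ to the twist by $\Oc_{X'}(-D)$ along $D$, or more directly the formula $\chi(\Oc_D) = -\tfrac12 D\cdot(D+K_\pi)$ obtained from Riemann--Roch on $X'$ (equivalently, $p_a(D) = 1 + \tfrac12(K_\pi+D)\cdot D$ and $\chi(\Oc_D) = 1 - p_a(D)$ when $D$ is connected, with the evident additive correction by the number of connected components in general), one sees that $\chi(\Oc_D) = \mathcal{C}(D)$, since $\lceil D\rceil = D$ for an integral divisor. Putting these together yields $\dim_{\bC}\Oc_{X,O}/I_D = \mathcal{C}(D)$.

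The step I expect to be the main obstacle is the vanishing $R^1\pi_*\Oc_{X'}(-D) = 0$ for an arbitrary antinef integral divisor $D$: unlike the case $D = F$ attached to a genuine $\fM$-primary ideal where local vanishing for multiplier ideals applies directly, here one needs that \emph{every} antinef divisor arises this way (which is exactly the content of the correspondence between antinef divisors and complete $\fM$-primary ideals in \cite{Lip69}), or alternatively one invokes \cite[\S18]{Lip69} for the vanishing directly. Once this is in hand, the identification $h^0(D,\Oc_D) = \chi(\Oc_D) = \mathcal{C}(D)$ is purely formal. I would therefore organize the proof as: (1) cite the antinef/$\fM$-primary correspondence and the attendant $R^1$-vanishing; (2) extract $\dim_{\bC}\Oc_{X,O}/I_D = h^0(D,\Oc_D)$ from the structure sequence; (3) note $H^1(D,\Oc_D)=0$ via the tree-of-rational-curves argument already rehearsed in the Preliminaries; (4) conclude by the Riemann--Roch identity $\chi(\Oc_D) = -\tfrac12\lceil D\rceil\cdot(\lceil D\rceil + K_\pi) = \mathcal{C}(D)$. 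As the excerpt itself notes, this is essentially \cite[Proposition 4.7.1]{Cas00} and \cite[Proposition 3.7]{Reg96}, so the write-up can be kept brief and may simply sketch these four points.
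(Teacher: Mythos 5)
Your argument is correct, and in fact the paper does not prove this proposition at all: it simply cites \cite[Proposition 4.7.1]{Cas00} and \cite[Proposition 3.7]{Reg96}, whose proofs run along exactly the lines you describe (structure sequence of $D$, vanishing of $R^1\pi_*\cO_{X'}(-D)$ for $D$ antinef, $H^1(D,\cO_D)=0$ from rationality, and Riemann--Roch to identify $\chi(\cO_D)$ with $-\tfrac12 D\cdot(D+K_\pi)$, using that $K_\pi$ and $K_{X'}$ have the same intersection numbers with exceptional curves). You correctly isolate the only nontrivial input, namely $R^1\pi_*\cO_{X'}(-D)=0$ for antinef $D$; the right reference for that is Lipman's vanishing theorem (\cite[Theorem 12.1]{Lip69}) rather than your alternative suggestion of deducing it from local vanishing for multiplier ideals, since $-D$ is not obviously of the form $\left\lceil K_\pi - cF'\right\rceil$ when $K_\pi$ is non-integral. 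With that citation in place the write-up is complete.
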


This result is no longer true for arbitrary divisors. However, there
are some non-antinef divisors for which this equality holds.

\begin{proposition} \label{tame_step}
Assume that a divisor $D' $ is obtained from a divisor $D $ by performing a single
unloading step. Then $ \mathcal{C}(D) \geqslant \mathcal{C}(D')$ and the equality holds if and only if the unloading step is tame.

\end{proposition}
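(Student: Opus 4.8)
The statement compares the virtual codimension before and after a single unloading step, so the natural approach is to compute the difference $\mathcal{C}(D) - \mathcal{C}(D')$ directly in terms of the data $\Theta$ and $n_i = \lceil \rho_i / E_i^2 \rceil$ that defines the step. Writing $D' = \lceil D \rceil + N$ with $N = \sum_{E_i \in \Theta} n_i E_i$, I would expand
\[
\mathcal{C}(D) - \mathcal{C}(D') = -\tfrac{1}{2}\Big( \lceil D \rceil \cdot (\lceil D \rceil + K_\pi) - (\lceil D\rceil + N)\cdot(\lceil D \rceil + N + K_\pi) \Big),
\]
and the cross terms collapse to an expression of the form $N \cdot \lceil D \rceil + \tfrac{1}{2} N\cdot N + \tfrac{1}{2} N \cdot K_\pi$, which after using the adjunction relation $(K_\pi + E_i)\cdot E_i = -2$ can be rewritten using only the excesses $\rho_i = -\lceil D \rceil \cdot E_i$ and the integers $n_i$.

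**Main computation.** The key step is to reorganize this difference as a sum over the connected subconfiguration $\Theta$. For a single curve $E_i \in \Theta$ the contribution works out to $-n_i \rho_i + \binom{n_i+1}{2}(-E_i^2) \ge 0$ plus correction terms $-n_i n_j$ for adjacent pairs $E_i, E_j \in \Theta$ coming from $N\cdot N$. Grouping everything, one gets
\[
\mathcal{C}(D) - \mathcal{C}(D') = \sum_{E_i \in \Theta} \Big( -n_i \rho_i - \binom{n_i+1}{2} E_i^2 \Big) - \sum_{\{E_i,E_j\}\subseteq \Theta,\ E_i\cdot E_j=1} n_i n_j .
\]
Since $n_i = \lceil \rho_i / E_i^2 \rceil$ and $E_i^2 < 0$, each term $-n_i \rho_i - \binom{n_i+1}{2}E_i^2$ is a nonnegative integer (it is $\ge 0$ precisely because $n_i$ is the smallest integer making the new excess at $E_i$ nonnegative), and it equals its minimum value exactly when $n_i = 1$, i.e. when $\rho_i = -1$ and $E_i^2 = -2$ — but one must argue more carefully, since $E_i^2$ can be $\le -2$. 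The cleanest route is to show each single-curve term is $\ge 0$ with equality iff $n_i = 1$, and that the adjacency corrections $-n_i n_j$ force, for equality of the total, that no two curves of $\Theta$ are adjacent. Combining, $\mathcal{C}(D) = \mathcal{C}(D')$ forces $n_i = 1$ for all $i$ and $\Theta$ discrete, which is exactly the definition of a tame unloading; conversely a tame step makes every term vanish.

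**Anticipated obstacle.** The delicate point is handling the case $E_i^2 \le -2$ together with the adjacency terms simultaneously: one needs that the positive single-curve contributions genuinely dominate the negative $-n_i n_j$ corrections, rather than merely that each piece is individually signed. I expect the right bookkeeping is to fix a curve $E_i$ and absorb half of each adjacent correction $-\tfrac{1}{2}n_i n_j$ into $E_i$'s term, then show $-n_i\rho_i - \binom{n_i+1}{2}E_i^2 - \tfrac{1}{2}\sum_{j\text{ adj}} n_i n_j \ge 0$; here one uses that $\rho_i = -\lceil D\rceil\cdot E_i$ already counts the contributions of components of $\lceil D \rceil$ adjacent to $E_i$, and the number of neighbours of $E_i$ in $\Theta$ is bounded by $a(E_i) \le -E_i^2 \cdot(\text{something})$ via the negative-definiteness of the intersection matrix — more precisely $\sum_{j \text{ adj to } i} 1 \le -E_i^2$ is \emph{not} always true, so instead I would invoke that $E_i^2 \le -a(E_i)$ fails in general and fall back on the cruder but sufficient estimate that the full expression is a determinant-like quantity controlled by negative-definiteness. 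Once the sign is pinned down, tracking the equality case is routine, so the entire difficulty is concentrated in this one inequality.
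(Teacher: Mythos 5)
Your overall strategy is the same as the paper's: expand the quadratic forms defining $\mathcal{C}$, use adjunction to eliminate $K_\pi\cdot E_i$, and split the difference into diagonal and off-diagonal contributions over $\Theta$. However, the execution contains a sign error that derails the whole argument. Writing $N=\sum_{E_i\in\Theta}n_iE_i$, the correct expansion is
\[
\mathcal{C}(D)-\mathcal{C}(D')=N\cdot\lceil D\rceil+\tfrac12\,N\cdot(N+K_\pi)
=\sum_{E_i\in\Theta}\frac{n_i}{2}\bigl(-2\rho_i+(n_i-1)E_i^2-2\bigr)+\sum_{i<j}n_in_j\,E_i\cdot E_j\,.
\]
The off-diagonal term carries a \emph{plus} sign, and since distinct exceptional components meet nonnegatively it is automatically $\geqslant 0$, vanishing precisely when no two curves of $\Theta$ are adjacent. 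There is therefore nothing for the diagonal terms to ``dominate,'' and no appeal to negative-definiteness is needed (nor would your fallback work as described: negative-definiteness controls the full intersection form, not the off-diagonal part in isolation). Your entire ``anticipated obstacle'' paragraph is chasing a difficulty that does not exist once the sign is corrected.

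Your diagonal term is also not the one the expansion produces: the correct contribution of $E_i$ is $-n_i\rho_i+\binom{n_i}{2}E_i^2-n_i$, not $-n_i\rho_i-\binom{n_i+1}{2}E_i^2$. The latter is strictly positive whenever $\rho_i<0$ and $E_i^2<0$, so with your formula a tame step (where $\rho_i=-1$, $n_i=1$, $\Theta$ discrete) would give $\mathcal{C}(D)-\mathcal{C}(D')=\sum_i(1-E_i^2)>0$, contradicting the equality you are supposed to prove; this alone shows the decomposition cannot be right. With the correct term, nonnegativity follows from the defining property $n_i-1<\rho_i/E_i^2\leqslant n_i$ of $n_i=\lceil\rho_i/E_i^2\rceil$, which gives $-\rho_i\geqslant-(n_i-1)E_i^2+1$ and hence a lower bound $-\binom{n_i}{2}E_i^2\geqslant 0$ for the diagonal term; equality holds iff $\rho_i=-1$ (which automatically forces $n_i=1$), with \emph{no} condition on $E_i^2$. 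In particular your claim that equality requires $E_i^2=-2$ is wrong: tameness is $\rho_i=-1$ for all $E_i\in\Theta$ plus no adjacencies in $\Theta$, irrespective of the self-intersections. With these corrections the proof reduces to the routine term-by-term sign check carried out in the paper.
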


\begin{proof}
Notice that, in order to compute the virtual codimension, we may
always assume $D=\lceil D \rceil$. Hence,  $D'=D +\sum_{E_i \in
\Theta} n_i E_i$, where $\Theta$  and $n_i= \left\lceil
\frac{\rho_i}{E_i^2}\right\rceil$
 are defined as above. Therefore:
 $$\begin{array}{rcl}
    \mathcal{C}(D)-\mathcal{C}(D')    &=  &-\frac{1}{2}\left({D}^2-{D'}^2+K_\pi\cdot(D-D')\right)\\
            &=  &-\frac{1}{2}\left(-2\left(\sum_in_iE_i\right){D}-\left(\sum_in_iE_i\right)^2-K_\pi\cdot\left(\sum_in_iE_i\right)\right)\\
            &=  &-\frac{1}{2}\left(-2\left(\sum_in_iE_i\right){D}-\left(\sum_in_iE_i\right)^2+2\sum_in_i+\sum_in_iE_i^2\right)\\
            &=  &\sum_i\frac{n_i}{2}\left(-2\rho_i+(n_i-1)E_i^2-2\right)+\sum_i\sum_{j>i}n_in_jE_i\cdot E_j\\
   \end{array}
 $$
We are assuming $n_i\geqslant 1$ for all $E_i \in \Theta$ so the summands
$\frac{n_i}{2}\left(-2\rho_i+(n_i-1)E_i^2-2\right)$ are always $\geqslant
0$. Notice that they are zero if and only if $\rho_i=-1$ for all
$E_i\in \Theta$. On the other hand, $\sum_i\sum_{j>i}n_in_jE_i\cdot
E_j \geqslant 0$ and equality holds if and only if $E_i\cdot E_j=0$ for all $E_i \neq E_j \in \Theta$, i.e.
there are no adjacent divisors in the set $\Theta$.
\end{proof}

\begin{corollary} \label{tame_unloading}
 Let $\widetilde{D}$ be the antinef closure of a divisor $D $ and $I_{D}$ their associated ideal, then:
 $$\mathcal{C}(D)\geqslant \mathcal{C}(\widetilde{D})= \dim_{\bC} \cO_{X,O}/I_{D}$$
 and the equality holds if and only if all the unloading steps performed to obtain $\widetilde{D}$
 are tame.
 \end{corollary}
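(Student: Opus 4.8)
The plan is to deduce Corollary~\ref{tame_unloading} by iterating Proposition~\ref{tame_step} along the unloading process and invoking Proposition~\ref{virtual codim is codim} at the terminal step. First I would recall that, as noted in the text preceding the corollary, the antinef closure $\widetilde{D}$ of $D$ is reached after finitely many unloading steps, say
$$\lceil D\rceil = D_0 \rightsquigarrow D_1 \rightsquigarrow \cdots \rightsquigarrow D_N = \widetilde{D},$$
where each $D_{k+1}$ is obtained from $D_k$ by a single unloading step. Since $\mathcal{C}$ depends only on the round-up and $\mathcal{C}(D)=\mathcal{C}(\lceil D\rceil)$, we may start the chain at $\lceil D\rceil$.

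Next I would apply Proposition~\ref{tame_step} to each consecutive pair $(D_k, D_{k+1})$, obtaining
$$\mathcal{C}(D) = \mathcal{C}(D_0) \geqslant \mathcal{C}(D_1) \geqslant \cdots \geqslant \mathcal{C}(D_N) = \mathcal{C}(\widetilde{D}),$$
so that $\mathcal{C}(D)\geqslant \mathcal{C}(\widetilde{D})$. Because $\widetilde{D}$ is antinef by construction, Proposition~\ref{virtual codim is codim} gives $\mathcal{C}(\widetilde{D}) = \dim_{\bC}\cO_{X,O}/I_{\widetilde{D}}$, and since $\widetilde{D}$ and $D$ define the same ideal $I_D = I_{\widetilde{D}}$ (they are equivalent, being in the same equivalence class), this equals $\dim_{\bC}\cO_{X,O}/I_D$. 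This establishes the displayed chain of the corollary.

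For the equality statement, I would argue in both directions. If every unloading step is tame, then by the equality clause of Proposition~\ref{tame_step} each inequality $\mathcal{C}(D_k)\geqslant \mathcal{C}(D_{k+1})$ is an equality, hence $\mathcal{C}(D) = \mathcal{C}(\widetilde{D})$. Conversely, if $\mathcal{C}(D) = \mathcal{C}(\widetilde{D})$, then since all the intermediate inequalities go the same way, every one of them must be an equality, so by Proposition~\ref{tame_step} each step is tame. The main (and essentially only) subtlety I anticipate is making sure the bookkeeping of the round-ups is clean: one must observe that $\mathcal{C}$ is insensitive to replacing $D$ by $\lceil D\rceil$ so that the telescoping chain genuinely begins where the unloading algorithm does, and that $D_{k+1}$ as produced by the algorithm already has integral coefficients, so Proposition~\ref{tame_step} applies verbatim at each stage. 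No geometry beyond what is quoted is needed; the argument is a straightforward induction once these normalizations are in place.
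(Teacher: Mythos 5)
Your proposal is correct and is precisely the argument the paper intends: the corollary is stated without proof as an immediate consequence of iterating Proposition~\ref{tame_step} along the finite chain of unloading steps and applying Proposition~\ref{virtual codim is codim} to the antinef divisor $\widetilde{D}$, together with the fact that $D$ and $\widetilde{D}$ define the same ideal. Your handling of the equality case via the telescoping chain is also the intended reasoning.
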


When we deal with multiplier ideals we can extract a very simple formula for the multiplicity of any real number.

\begin{proposition}\label{virtual1}
Let $D_c$ and $D_{c-\varepsilon}$ be the antinef closures of $\left\lfloor c F - K_{\pi} \right\rfloor$
and $\left\lfloor (c-\varepsilon) F - K_{\pi} \right\rfloor$ respectively, for any $c\in \bR_{\geqslant 0}$ and $\varepsilon$ small enough.
Then, the multiplicity of $c$ is
$$m(c)=\mathcal{C}(D_c)-\mathcal{C}(D_{c-\varepsilon})= \frac{ D_{c-\varepsilon} \cdot( D_{c-\varepsilon}+K_\pi)}{2}
-\frac{ D_{c} \cdot( D_{c}+K_\pi)}{2}.$$
\end{proposition}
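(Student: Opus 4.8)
The plan is to identify the multiplier ideals $\J(\fa^c)$ and $\J(\fa^{c-\varepsilon})$ with the complete ideals attached to the antinef closures $D_c$ and $D_{c-\varepsilon}$, and then apply Proposition \ref{virtual codim is codim} to both. The starting observation is that, by definition of the multiplier ideal and of the round-up,
$$\J(\fa^c)=\pi_*\cO_{X'}(\lceil K_\pi-cF\rceil)=\pi_*\cO_{X'}(-\lceil cF-K_\pi\rceil)=\pi_*\cO_{X'}(-\lfloor cF-K_\pi\rfloor),$$
where the last equality uses $-\lceil x\rceil=\lfloor -x\rfloor$ componentwise; more precisely $\lceil K_\pi-cF\rceil=-\lceil cF-K_\pi\rceil$ only up to the usual sign juggling, so I should phrase it as: the divisor $\lfloor cF-K_\pi\rfloor$ satisfies $\cO_{X'}(\lceil K_\pi-cF\rceil)=\cO_{X'}(-\lfloor cF-K_\pi\rfloor)$. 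Hence $\J(\fa^c)=I_{\lfloor cF-K_\pi\rfloor}$ in the notation of the virtual-codimension subsection, and this ideal depends only on the equivalence class of $\lfloor cF-K_\pi\rfloor$, so $\J(\fa^c)=I_{D_c}$ where $D_c$ is the antinef closure. The same computation with $c$ replaced by $c-\varepsilon$ gives $\J(\fa^{c-\varepsilon})=I_{D_{c-\varepsilon}}$.

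Next I would invoke Proposition \ref{virtual codim is codim}: since $D_c$ and $D_{c-\varepsilon}$ are antinef, $\mathcal{C}(D_c)=\dim_{\bC}\cO_{X,O}/I_{D_c}=\dim_{\bC}\cO_{X,O}/\J(\fa^c)$ and likewise $\mathcal{C}(D_{c-\varepsilon})=\dim_{\bC}\cO_{X,O}/\J(\fa^{c-\varepsilon})$. From the nested inclusion $\J(\fa^{c-\varepsilon})\supseteq\J(\fa^c)$ we get
$$m(c)=\dim_{\bC}\frac{\J(\fa^{c-\varepsilon})}{\J(\fa^c)}=\dim_{\bC}\frac{\cO_{X,O}}{\J(\fa^c)}-\dim_{\bC}\frac{\cO_{X,O}}{\J(\fa^{c-\varepsilon})}=\mathcal{C}(D_c)-\mathcal{C}(D_{c-\varepsilon}).$$
Finally, unwinding the definition $\mathcal{C}(D)=-\tfrac{\lceil D\rceil\cdot(\lceil D\rceil+K_\pi)}{2}$ and using that $D_c$, $D_{c-\varepsilon}$ are already integral (being antinef closures) yields the displayed closed form
$$m(c)=\frac{D_{c-\varepsilon}\cdot(D_{c-\varepsilon}+K_\pi)}{2}-\frac{D_{c}\cdot(D_{c}+K_\pi)}{2}.$$

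The only genuinely delicate point is the bookkeeping in the first step: one must check that $\lfloor cF-K_\pi\rfloor$ is the correct effective divisor whose associated ideal is $\J(\fa^c)$, i.e. that $\lceil K_\pi-cF\rceil=-\lfloor cF-K_\pi\rfloor$ as divisors and that this divisor is effective (equivalently that $\J(\fa^c)\subseteq\cO_{X,O}$, which holds because $\fa\subseteq\fM$ forces $cF-K_\pi$ to have nonnegative round-down for $c>0$, or one simply works with the ideal $\pi_*\cO_{X'}(-\lceil D\rceil)$ for the \emph{integral part} without insisting on effectivity and notes $I_D=I_{\widetilde D}$ regardless). The passage to the antinef closure is then automatic from the discussion preceding Proposition \ref{virtual codim is codim}, since $I_D=I_{\widetilde D}$ by definition of equivalence of divisors, and the rest is the purely formal subtraction above. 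I expect no obstacle beyond this sign/effectivity check and the routine algebraic rearrangement of $\mathcal{C}$.
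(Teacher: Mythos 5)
Your proposal is correct and follows essentially the same route as the paper: identify $\J(\fa^c)$ and $\J(\fa^{c-\varepsilon})$ with the ideals of the antinef closures $D_c$ and $D_{c-\varepsilon}$, write $m(c)$ as a difference of codimensions in $\cO_{X,O}$, and apply Proposition \ref{virtual codim is codim}. The sign bookkeeping $\cO_{X'}(\lceil K_\pi-cF\rceil)=\cO_{X'}(-\lfloor cF-K_\pi\rfloor)$ that you flag as the delicate point is handled correctly, and the paper's own proof treats it as immediate.
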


\begin{proof}

We have $$m(c)=\dim_{\bC} \cO_{X,O}/ \J\left(\fa^c\right) - \dim_{\bC} \cO_{X,O}/ \J\left(\fa^{c-\varepsilon}\right) $$
and, using Proposition \ref{virtual codim is codim}, the virtual codimensions coincide with the
codimension for antinef divisors so $m(c)= \mathcal{C}(D_c)-\mathcal{C}(D_{c-\varepsilon})$ and the result follows.

\end{proof}

Actually there is no need to compute the antinef closure of the aforementioned divisors to obtain the same result.

\begin{proposition} \label{virtual2}
For any $c\in \bR_{\geqslant 0}$ and $\varepsilon$ small enough we have
$$m(c)=\mathcal{C}(\left\lfloor c F - K_{\pi} \right\rfloor)-\mathcal{C}(\left\lfloor (c-\varepsilon) F - K_{\pi} \right\rfloor)=$$ $$=
\frac{\left\lfloor (c-\varepsilon) F - K_{\pi} \right\rfloor \cdot( \left\lfloor (c-\varepsilon) F - K_{\pi} \right\rfloor +K_\pi)}{2}
-\frac{ \left\lfloor c F - K_{\pi} \right\rfloor \cdot( \left\lfloor c F - K_{\pi} \right\rfloor +K_\pi)}{2}.$$
\end{proposition}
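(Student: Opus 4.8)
The plan is to compare the virtual codimension of the naive divisor $\left\lfloor c F - K_{\pi} \right\rfloor$ with that of its antinef closure $D_c$, and likewise for the shifted exponent $c - \varepsilon$, and then invoke Proposition \ref{virtual1}. The key observation is that the claimed equality will follow once I show that the (possibly non-tame) unloading steps carried out on $\left\lfloor c F - K_{\pi} \right\rfloor$ and on $\left\lfloor (c-\varepsilon) F - K_{\pi} \right\rfloor$ contribute \emph{the same} total correction to the virtual codimension. More precisely, writing $\delta_c = \mathcal{C}(\left\lfloor c F - K_{\pi} \right\rfloor) - \mathcal{C}(D_c)$ for the aggregate drop along the unloading procedure (which is $\geqslant 0$ by Corollary \ref{tame_unloading}), I want to prove $\delta_c = \delta_{c-\varepsilon}$, so that
\[
\mathcal{C}(\left\lfloor c F - K_{\pi} \right\rfloor) - \mathcal{C}(\left\lfloor (c-\varepsilon) F - K_{\pi} \right\rfloor) = \mathcal{C}(D_c) - \mathcal{C}(D_{c-\varepsilon}) = m(c),
\]
the last equality being exactly Proposition \ref{virtual1}.

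First I would pin down the relationship between the two starting divisors. Since $\varepsilon$ is small enough, $\left\lfloor (c-\varepsilon) e_i - k_i \right\rfloor$ differs from $\left\lfloor c e_i - k_i \right\rfloor$ precisely on those components $E_i$ with $c e_i - k_i \in \bZ$, i.e. on the components of the maximal jumping divisor $H_c$; there the coefficient drops by exactly one. Hence $\left\lfloor c F - K_{\pi} \right\rfloor = \left\lfloor (c-\varepsilon) F - K_{\pi} \right\rfloor + H_c$. Next I would run the unloading algorithm on both divisors \emph{in parallel}, step by step, and argue inductively that at every stage the two current divisors continue to differ exactly by $H_c$ — equivalently, that the set $\Theta$ of components with negative excess, together with the multiplicities $n_i = \lceil \rho_i / E_i^2 \rceil$ added at each step, is the same for both. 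The point is that the excess $-\lceil D\rceil\cdot E_i$ of a component is an integer-linear functional of the coefficients; since $H_c$ has integer coefficients and $\left\lfloor (c-\varepsilon)F - K_\pi\right\rfloor$ already does too, adding $H_c$ shifts each excess by the fixed integer $-H_c\cdot E_i$, and one checks (using the numerical results of Section 3, in particular Proposition \ref{Num_condition_c} and the description of $H_c$) that this shift never changes the \emph{sign} of an excess during unloading, nor the rounded quotient $n_i$. This is the step I expect to be the main obstacle: making the parallel-unloading induction watertight requires carefully tracking how the excesses of components \emph{outside} $H_c$ evolve and confirming they behave identically in both runs.

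Granting that, the two unloading sequences are ``the same up to the fixed translate $H_c$'': if $D^{(0)} \to D^{(1)} \to \cdots \to D_{c-\varepsilon}$ is the unloading for the smaller exponent, then $D^{(0)} + H_c \to D^{(1)} + H_c \to \cdots \to D_{c-\varepsilon} + H_c$ is the unloading for the larger one, and the latter terminates at $D_{c-\varepsilon}+H_c$, which must therefore equal $D_c$. To conclude $\delta_c = \delta_{c-\varepsilon}$ I would compare the per-step drop in virtual codimension, using the explicit formula from the proof of Proposition \ref{tame_step}:
\[
\mathcal{C}(D) - \mathcal{C}(D') = \sum_i \frac{n_i}{2}\left(-2\rho_i + (n_i-1)E_i^2 - 2\right) + \sum_i\sum_{j>i} n_i n_j\, E_i\cdot E_j.
\]
Since the set $\Theta$, the integers $n_i$, and the intersection numbers $E_i\cdot E_j$ are identical in the two parallel steps, and since $\rho_i = -\lceil D\rceil\cdot E_i$ is also identical (the excess being unchanged by the $H_c$-translate precisely on $\Theta$, as established above), every summand agrees — so each step contributes the same drop, hence $\delta_c = \delta_{c-\varepsilon}$. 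Finally, substituting $\mathcal{C}(D) = -\tfrac12 \lceil D\rceil\cdot(\lceil D\rceil + K_\pi)$ and noting $\lceil\left\lfloor cF - K_\pi\right\rfloor\rceil = \left\lfloor cF - K_\pi\right\rfloor$ gives the displayed quadratic expression for $m(c)$, completing the proof.
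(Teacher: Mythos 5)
There is a genuine gap, and it sits exactly where you flagged ``the main obstacle'': the parallel-unloading induction is not merely delicate, it is false. If the two unloading sequences really stayed exact translates of one another by $H_c$ at every step, their endpoints would satisfy $D_c=D_{c-\varepsilon}+H_c$. But whenever $c$ is a candidate jumping number that is \emph{not} a jumping number, one has $\J(\fa^{c-\varepsilon})=\J(\fa^{c})$, hence the two floors are equivalent divisors and $D_c=D_{c-\varepsilon}$, while $H_c$ can be nonzero. Concretely, for $\fa=(x^3,y^{10})$ and $c=2/3$ (the paper's own example), $H_{2/3}=E_1+E_2+E_3+E_6\neq 0$ yet $m(2/3)=0$. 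So at some unloading step the set $\Theta$ or the multiplicities $n_i$ must diverge between the two runs; in particular the assertion that adding $H_c$ never changes the sign of an excess along the procedure cannot hold in general, and with it falls the step-by-step identification of the drops in virtual codimension. The target identity $\mathcal{C}(\lfloor cF-K_\pi\rfloor)-\mathcal{C}(D_c)=\mathcal{C}(\lfloor (c-\varepsilon)F-K_\pi\rfloor)-\mathcal{C}(D_{c-\varepsilon})$ is true a posteriori (it is equivalent to the proposition given Proposition \ref{virtual1}), but your mechanism would prove the stronger, false statement $D_c=D_{c-\varepsilon}+H_c$, so it cannot be repaired as stated.

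The paper's proof bypasses unloading and Proposition \ref{virtual1} entirely. Your first observation is the right starting point: $\lfloor cF-K_\pi\rfloor=\lfloor(c-\varepsilon)F-K_\pi\rfloor+H_c$. One then just expands the two quadratic forms to get
$$\mathcal{C}\bigl(\lfloor cF-K_\pi\rfloor\bigr)-\mathcal{C}\bigl(\lfloor cF-K_\pi\rfloor-H_c\bigr)=\bigl(\lceil K_\pi-cF\rceil+H_c\bigr)\cdot H_c-\tfrac{1}{2}\bigl(H_c+K_\pi\bigr)\cdot H_c,$$
and the last term equals $v_{H_c}-a_{H_c}=\#\{\text{connected components of }H_c\}$ because the $E_i$ are rational curves; Theorem \ref{multiplicity1_c} identifies the result with $m(c)$. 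So the correct route is a direct computation against the intersection-theoretic formula for $m(c)$, not a comparison of unloading defects.
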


\begin{proof}
 Recall that $\lceil K_\pi-(c -\varepsilon) F\rceil=\lceil K_\pi-c
F\rceil+H_c $. Then:

\vskip 4mm

$ \mathcal{C}(\left\lfloor c  F - K_{\pi}
\right\rfloor)-\mathcal{C}(\left\lfloor c  F - K_{\pi} \right\rfloor - H_c )  = $

\vskip 2mm

$ \hskip 1cm = \frac{1}{2} (\left\lfloor c  F - K_{\pi}
\right\rfloor - H_c )\cdot(\left\lfloor c  F - K_{\pi} \right\rfloor -
H_c  +K_\pi) -   \frac{1}{2} (\left\lfloor c  F - K_{\pi} \right\rfloor
)\cdot(\left\lfloor c  F - K_{\pi} \right\rfloor +K_\pi)$



\vskip 2mm

$ \hskip 1cm =   - \left\lfloor c  F - K_{\pi}
\right\rfloor\cdot H_c    +   \frac{H_c \cdot H_c}{2} - \frac{K_\pi \cdot H_c}{2}$

\vskip 2mm

$\hskip 1cm = 
(\lceil K_\pi - c F \rceil + H_c )\cdot H_c - \frac{(H_c  + K_\pi)\cdot H_c }{2}$

\vskip 2mm

$\hskip 1cm = (\lceil K_\pi - c F \rceil + H_c )\cdot H_c + \#\{\text{connected components of }H_c\} = m(c).$

\vskip 4mm
Here we used the fact that
$$\frac{1}{2}(K_\pi+H_c) \cdot H_c=-v_{H_c}+a_{H_c}=-\#\{\text{connected components of }H_c\}$$ and
Theorem \ref{multiplicity1_c}.

\end{proof}

Let $  {\lambda}' < \lambda$ be two consecutive jumping numbers of an $\fM$-primary ideal $\fa \subseteq \cO_{X,O}$.
Despite the fact that $ \left\lfloor \lambda' F - K_{\pi} \right\rfloor$ and
$ \left\lfloor (\lambda-\varepsilon) F - K_{\pi} \right\rfloor$ have the same antinef closure their virtual codimensions
may differ. However, we still have the following description of the multiplicity

\begin{proposition} \label{virtual3}
Let $  {\lambda}' < \lambda$ be two consecutive jumping numbers of an $\fM$-primary ideal $\fa \subseteq \cO_{X,O}$.
Then, the multiplicity of $\lambda$ is
$$m(\lambda)=\mathcal{C}(\left\lfloor \lambda F - K_{\pi} \right\rfloor)-\mathcal{C}(\left\lfloor \lambda' F - K_{\pi} \right\rfloor)=$$ $$=
\frac{\left\lfloor \lambda' F - K_{\pi} \right\rfloor \cdot( \left\lfloor \lambda' F - K_{\pi} \right\rfloor +K_\pi)}{2}
-\frac{ \left\lfloor \lambda F - K_{\pi} \right\rfloor \cdot( \left\lfloor \lambda F - K_{\pi} \right\rfloor +K_\pi)}{2}.$$
\end{proposition}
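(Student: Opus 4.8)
The plan is to reduce everything to Proposition \ref{virtual2} and to show that the virtual codimension $\mathcal{C}(\lfloor cF-K_\pi\rfloor)$, regarded as a function of $c$, does not change as $c$ runs over the half-open interval $[\lambda',\lambda)$. Set $g(c):=\mathcal{C}(\lfloor cF-K_\pi\rfloor)$ for $c\geqslant 0$. Since $\lfloor cF-K_\pi\rfloor=\sum_{i}\lfloor ce_i-k_i\rfloor E_i$, the divisor $\lfloor cF-K_\pi\rfloor$ — and hence $g$ — is a right-continuous step function of $c$ whose only possible discontinuities occur at the \emph{candidate jumping numbers}, i.e.\ the rationals of the form $(k_i+m)/e_i$ with $1\leqslant i\leqslant r$ and $m\in\bZ_{\geqslant 0}$, of which there are only finitely many in any bounded interval.

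Now I would invoke Proposition \ref{virtual2}: for every $c>0$ and every sufficiently small $\varepsilon>0$ one has $g(c)-g(c-\varepsilon)=m(c)$, so the jump of $g$ at a point $c$ is exactly $m(c)$. In particular $g$ can only jump at actual jumping numbers of $\fa$. Since $\lambda'$ and $\lambda$ are \emph{consecutive} jumping numbers, there is no jumping number in the open interval $(\lambda',\lambda)$; combined with the previous paragraph, this forces $g$ to be constant on $[\lambda',\lambda)$ (the jump of $g$ at $\lambda'$ itself being already absorbed in the value $g(\lambda')$). Consequently $g(\lambda-\varepsilon)=g(\lambda')$ for $\varepsilon$ small enough, that is, $\mathcal{C}(\lfloor(\lambda-\varepsilon)F-K_\pi\rfloor)=\mathcal{C}(\lfloor\lambda'F-K_\pi\rfloor)$.

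Plugging this equality into Proposition \ref{virtual2} evaluated at $c=\lambda$ yields
\[
m(\lambda)=\mathcal{C}\bigl(\lfloor\lambda F-K_\pi\rfloor\bigr)-\mathcal{C}\bigl(\lfloor(\lambda-\varepsilon)F-K_\pi\rfloor\bigr)=\mathcal{C}\bigl(\lfloor\lambda F-K_\pi\rfloor\bigr)-\mathcal{C}\bigl(\lfloor\lambda'F-K_\pi\rfloor\bigr),
\]
and the last expression in the statement is just the definition of $\mathcal{C}$. (Equivalently, one can telescope Proposition \ref{virtual2} over the finitely many candidate jumping numbers $\lambda'<c_1<\cdots<c_s=\lambda$ in $(\lambda',\lambda]$, using $m(c_j)=0$ for $j<s$.) I expect the main (and essentially only) obstacle to be conceptual rather than computational: one cannot simply say that $\lfloor\lambda'F-K_\pi\rfloor$ and $\lfloor(\lambda-\varepsilon)F-K_\pi\rfloor$, having a common antinef closure, have equal virtual codimension, because $\mathcal{C}$ is \emph{not} an invariant of the equivalence class; the substance of the argument is precisely that along this particular chain of divisors $\mathcal{C}$ nevertheless stays constant, each intermediate step sitting at a non-jumping candidate number.
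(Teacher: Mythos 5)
Your proposal is correct and is essentially the paper's own argument: the paper also telescopes Proposition \ref{virtual2} over the finitely many candidate numbers $\gamma_1<\cdots<\gamma_r$ in $(\lambda',\lambda)$ where $\lfloor cF-K_\pi\rfloor$ can change, using $m(\gamma_i)=0$ to conclude that $\mathcal{C}(\lfloor cF-K_\pi\rfloor)$ is constant on $[\lambda',\lambda)$. Your closing remark about why one cannot argue via the common antinef closure matches the caveat the paper itself makes just before the proposition.
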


\begin{proof}

Consider all the rational numbers $\gamma \in (\lambda',\lambda)$ for which there exists at
least one component $E_i$ such that $\gamma e_i -k_i\in\Z$. We order them  to form
a finite sequence of rational numbers $\lambda' < \gamma_1 < \cdots < \gamma_r < \lambda$.
Notice that these are the only rational numbers in this interval where the virtual codimension
of $\left\lfloor \gamma F - K_{\pi} \right\rfloor$ may increase. 

\vskip 2mm

We have $$m(\lambda)= \mathcal{C}(\left\lfloor \lambda F - K_{\pi} \right\rfloor)-\mathcal{C}(\left\lfloor (\lambda-\varepsilon) F - K_{\pi} \right\rfloor)=
\mathcal{C}(\left\lfloor \lambda F - K_{\pi} \right\rfloor)-\mathcal{C}(\left\lfloor \gamma_r F - K_{\pi} \right\rfloor)$$ and, at every step of the sequence,
$m(\gamma_i)= \mathcal{C}(\left\lfloor \gamma_i F - K_{\pi} \right\rfloor)-\mathcal{C}(\left\lfloor \gamma_{i-1} F - K_{\pi} \right\rfloor)$. Therefore
$$m(\lambda)=m(\lambda)+\sum_{i>0}m(\gamma_i)=\mathcal{C}\left(\lfloor\lambda F - K_\pi \rfloor\right)-\mathcal{C}\left(\lfloor\lambda' F - K_\pi\rfloor\right)\,$$
due to the fact that $m(\gamma_i)=0$ as these rational numbers are not jumping numbers.
\end{proof}

\vskip 2mm

\begin{remark}
In the case that $X$ is smooth we can check that the unloading steps needed to compute the antinef closure of
$\left\lfloor c F - K_{\pi} \right\rfloor$  for any $c\in \bR_{\geqslant 0}$ are tame. Indeed, repeating the same
arguments considered in the proof of Proposition \ref{virtual3} we may end up with the case $c=0$.
It is then easy to check that $\mathcal{C}\left(\lfloor -K_\pi\rfloor\right)= \mathcal{C}(D_{0})=0$ so we get
$$\mathcal{C}\left(\lfloor c F -K_\pi\rfloor\right)=\mathcal{C}(D_{c})\,.$$
This concludes the remark thanks to Corollary \ref{tame_unloading}.
\end{remark}

\section{Jumping Numbers via multiplicities}

Fix a log-resolution $\pi: X' \lra X$ of an $\fM$-primary ideal  $\fa \subseteq \cO_{X,O}$.
Consider the relative canonical divisor $K_\pi = \sum_{i=1}^r k_i E_i $,
and the divisor $F=\sum_{i=1}^r e_i E_i $ such that $\fa \cdot \cO_{X'}= \cO_{X'}(-F)$.
The jumps between multiplier ideals must occur at rational numbers that belong to
the set of {\it candidate jumping numbers}
$$\left \{  \frac{k_i + m}{e_i} \hskip 2mm | \hskip 2mm m\in \bZ_{>0}  \right\}.$$
Not every candidate jumping number is necessarily a jumping number. Using the formulas for the
multiplicity given in the previous section we can easily extract the set of jumping numbers
since we have:

\begin{proposition}\label{JN_m}
Let $\fa \subseteq \cO_{X,O}$ be an $\fM$-primary ideal and $c \in
\bR_{>0}$. Then, $c$ is a jumping number if and only if $m(c) > 0$.
\end{proposition}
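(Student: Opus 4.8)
The plan is to derive Proposition \ref{JN_m} directly from the definition of jumping number and the multiplicity formula \eqref{eq-def-mult-general}. Recall that by definition $c$ is a jumping number precisely when $\J(\fa^{c-\varepsilon}) \varsupsetneq \J(\fa^c)$ for all sufficiently small $\varepsilon > 0$, and that the multiplicity is
$$m(c) = \dim_{\bC} \frac{\J(\fa^{c-\varepsilon})}{\J(\fa^c)}.$$
Since $\J(\fa^c) \subseteq \J(\fa^{c-\varepsilon})$ always holds, the quotient is a finite-dimensional $\bC$-vector space (both ideals being $\fM$-primary), so $m(c) \geqslant 0$, and $m(c) > 0$ if and only if the inclusion is strict, i.e. if and only if $c$ is a jumping number. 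This is essentially the observation already recorded right after \eqref{eq-def-mult-general}, so at the level of pure definitions the statement is immediate.

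What makes the proposition worth stating here is that we now have \emph{effective} formulas for $m(c)$ — in particular Theorem \ref{multiplicity1_c} and Corollary \ref{multiplicity2} expressing $m(c)$ combinatorially in terms of the maximal jumping divisor $H_c$, and Proposition \ref{virtual2} expressing it via virtual codimensions. So the second thing I would do is point out that combining the definitional equivalence with, say, Corollary \ref{multiplicity2} gives a genuinely computable criterion: $c$ is a jumping number if and only if
$$\sum_{E_i \leqslant H_c} \left( \sum_{E_j \in \adj(E_i)} \{c e_j - k_j\} + c\rho_i \right) > \#\{\text{connected components of } H_c\}.$$
This is the form that feeds into the algorithm of Section \S5, so it is worth making the link explicit even though the logical content is light.

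There is really no substantial obstacle here; the only point requiring a word of care is the existence of a \emph{uniform} small $\varepsilon$ for which $\J(\fa^{c-\varepsilon})$ stabilizes, so that $m(c)$ is well-defined. This is guaranteed by the discreteness of the set of jumping numbers (equivalently, the nested chain displayed in the Preliminaries is indexed by a discrete increasing sequence $\lambda_1 < \lambda_2 < \cdots$), which ensures that $\J(\fa^{c-\varepsilon}) = \J(\fa^{\lambda_i})$ for the largest jumping number $\lambda_i < c$ (or equals $\cO_{X,O}$ if there is none) once $\varepsilon$ is smaller than $c - \lambda_i$. With that in hand, $m(c) = \dim_{\bC}\J(\fa^{c-\varepsilon})/\J(\fa^c)$ is finite and vanishes exactly when the two multiplier ideals agree, which is the negation of $c$ being a jumping number. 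Hence $c$ is a jumping number if and only if $m(c) > 0$, as claimed.
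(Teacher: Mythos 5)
Your proof is correct and takes essentially the same route as the paper, which offers no separate argument for this proposition but simply records after the definition of $m(c)$ in \eqref{eq-def-mult-general} that the equivalence is immediate: the inclusion $\J(\fa^c)\subseteq\J(\fa^{c-\varepsilon})$ is strict exactly when the finite-dimensional quotient is nonzero. Your added remarks on the stabilization of $\varepsilon$ and the link to the effective formulas are sensible but not part of the paper's (essentially empty) proof.
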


In addition, we have the following simple criterion

\begin{theorem} \label{cond_suf}
Let $\fa \subseteq \cO_{X,O}$ be an $\fM$-primary ideal and $c \in
\bR_{>0}$. Then, there exists a connected  component $H\leqslant
H_c$ such that
$$\left(\left\lceil K_\pi-c F\right\rceil + H_c\right)\cdot H > -1$$ if and only if $m(c)>0$.
\end{theorem}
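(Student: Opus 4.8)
The plan is to derive this criterion directly from the formula for $m(c)$ in Theorem \ref{multiplicity1_c} together with the structural inequalities of Proposition \ref{Num_condition_c}. Recall that
$$m(c) = \left(\left\lceil K_\pi - cF\right\rceil + H_c\right)\cdot H_c + \#\{\text{connected components of } H_c\},$$
and that the intersection with $H_c$ decomposes as a sum over the connected components of $H_c$. So if $H_c$ has connected components $H^{(1)},\dots,H^{(s)}$, then
$$m(c) = \sum_{j=1}^s \left(\left(\left\lceil K_\pi - cF\right\rceil + H_c\right)\cdot H^{(j)} + 1\right).$$
By the second bullet of Proposition \ref{Num_condition_c}, each summand $\left(\left\lceil K_\pi - cF\right\rceil + H_c\right)\cdot H^{(j)} + 1$ is $\geqslant 0$; moreover it is an integer. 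Hence $m(c) > 0$ if and only if at least one of these summands is strictly positive, i.e. if and only if there exists a connected component $H^{(j)} \leqslant H_c$ with $\left(\left\lceil K_\pi - cF\right\rceil + H_c\right)\cdot H^{(j)} \geqslant 0$, equivalently $> -1$.

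First I would handle the trivial direction: if no such component exists, then every connected component contributes exactly $0$ to the sum above, so $m(c) = 0$, and $c$ is not a jumping number by Proposition \ref{JN_m}. Conversely, if some component $H \leqslant H_c$ satisfies $\left(\left\lceil K_\pi - cF\right\rceil + H_c\right)\cdot H > -1$, then since this intersection number lies in $\bZ$ it is $\geqslant 0$, so the corresponding summand is $\geqslant 1$, while all other summands are $\geqslant 0$; therefore $m(c) \geqslant 1 > 0$. The only subtlety worth spelling out is the edge case $H_c = 0$: if $c$ has empty maximal jumping divisor there are no connected components, the sum is empty, $m(c) = 0$, and there is nothing to check — the statement holds vacuously since no connected component $H \leqslant H_c$ exists.

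The key ingredients are thus already in place: the additivity of the intersection product over connected components, the integrality of each partial intersection number, and the sharp lower bound $\geqslant -1$ from Proposition \ref{Num_condition_c} (which is exactly what makes the ``$> -1$'' threshold the right one). I do not anticipate a genuine obstacle here; the one point requiring a little care is making explicit that $\left(\left\lceil K_\pi - cF\right\rceil + H_c\right)\cdot H^{(j)}$ is an integer for each $j$ — this is immediate because $\left\lceil K_\pi - cF\right\rceil$ and $H_c$ are integral divisors — so that ``$\geqslant 0$'' and ``$> -1$'' coincide. With that observation the proof is a two-line consequence of Theorem \ref{multiplicity1_c} and Proposition \ref{Num_condition_c}.
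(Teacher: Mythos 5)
Your argument is exactly the paper's: decompose $m(c)$ from Theorem \ref{multiplicity1_c} as $\sum_{H\leqslant H_c}\left(\left(\left\lceil K_\pi-cF\right\rceil+H_c\right)\cdot H+1\right)$ over connected components and invoke the bound $\geqslant -1$ from Proposition \ref{Num_condition_c}. The extra remarks on integrality and the vacuous case $H_c=0$ are harmless refinements of the same proof.
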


\begin{proof}

Theorem \ref{multiplicity1_c} states that the multiplicity of $c$ is:
\begin{align*}
m\left(c\right) & = \left(\left\lceil K_{\pi}-cF\right\rceil +
H_c\right)\cdot H_c +\#\left\{\text{connected components of }
H_c\right\} \\  & = \sum_{H\leqslant
H_c} \left( \left(\left\lceil K_\pi-c F\right\rceil + H_c\right)\cdot H +1\right),
\end{align*}
where the sum is taken over all the connected components $H\leqslant
H_c$. Then, the result follows since $\left(\left\lceil K_\pi-c F\right\rceil + H_c\right)\cdot H \geqslant -1$
by Proposition \ref{Num_condition_c}.
\end{proof}

\vskip 2mm Therefore we have a simple algorithm to compute the set of jumping numbers of $\fa$
that boils down to compute the multiplicity of the rational numbers in the set of
candidate jumping numbers by means of the formula given in Theorem \ref{multiplicity1_c}
or the one given in Proposition \ref{virtual2}. We have implemented this algorithm  in the Computer Algebra system
{\tt Macaulay 2} \cite{GS}.
The scripts of the source codes  as well as the output in full detail of some
examples will be available at the web page
\begin{center}
  {\tt  www.pagines.ma1.upc.edu/$\sim$jalvz/multiplier.html}
\end{center}
It turns out that this algorithm is more efficient than the algorithms considered by Tucker in \cite{Tuc10} and
the first three authors in \cite{ACAMDC13}.

\subsection{Jumping numbers contributed by dicritical divisors}
Another interesting consequence of the methods developed in the previous sections
is the fact that we can describe a big chunk of the set of jumping numbers
by means of an inspection of dicritical divisors. In the sequel we will consider
 a dicritical divisor $E_i$ with excess $\rho_i=-F\cdot E_i>0$ and value
$v_i(F)=e_i$.

\begin{theorem}
Let $\fa \subseteq \cO_{X,O}$ be an $\fM$-primary ideal. Let $k\in
\bN$ be a  non-negative integer such that
$\frac{k}{e_i}>\frac{1}{\rho_i}$. Then, $\lambda=\frac{k}{e_i}$ is a
jumping number.
\end{theorem}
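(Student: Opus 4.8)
The plan is to show that the maximal jumping divisor $H_c$ for $c=\lambda=\frac{k}{e_i}$ contains the dicritical component $E_i$, and then use the criterion of Theorem~\ref{cond_suf} by exhibiting a connected component $H\leqslant H_c$ with $\left(\left\lceil K_\pi-cF\right\rceil+H_c\right)\cdot H>-1$. First I would observe that since $\lambda e_i=k\in\bZ$ we automatically have $\lambda e_i-k_i=k-k_i\in\bZ$, so $E_i\leqslant H_c$ by the second description in Definition~\ref{df-maximal-jumping}. Thus $E_i$ sits inside some connected component $H$ of $H_c$.

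The main computation is to estimate $\left(\left\lceil K_\pi-cF\right\rceil+H_c\right)\cdot H$ from below. By Proposition~\ref{Num_condition_c} this is always $\geqslant-1$, so it suffices to rule out equality; equivalently, by the analysis in the proof of that proposition, equality $\left(\left\lceil K_\pi-cF\right\rceil+H_c\right)\cdot H=-1$ would force
$$\sum_{E_\ell\leqslant H}\ \sum_{E_j\in\adj(E_\ell)}\left\{ce_j-k_j\right\}\ +\ c\sum_{E_\ell\leqslant H}\rho_\ell\ =\ 1.$$
Since $E_i\leqslant H$ is dicritical with $\rho_i>0$, the term $c\rho_i=\frac{k}{e_i}\rho_i$ already appears on the left-hand side, and the hypothesis $\frac{k}{e_i}>\frac{1}{\rho_i}$ says precisely that $c\rho_i>1$. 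As all other summands are nonnegative, the left-hand side is strictly greater than $1$, so equality cannot hold; hence $\left(\left\lceil K_\pi-cF\right\rceil+H_c\right)\cdot H>-1$ (it is $\geqslant 0$ by integrality, though we only need the strict inequality). Applying Theorem~\ref{cond_suf} we conclude $m(\lambda)>0$, i.e. $\lambda$ is a jumping number.

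The step I expect to require the most care is the passage from Lemma~\ref{num} (summed over the components of $H$) to the inequality above: one must check that $\left(\left\lceil K_\pi-cF\right\rceil+H_c\right)\cdot H = -2+\sum_{E_\ell\leqslant H}\bigl(\sum_{E_j\in\adj(E_\ell)}\{ce_j-k_j\}+c\rho_\ell\bigr)$ using $a_H-v_H=1$ exactly as in the proof of Proposition~\ref{Num_condition_c}, and then note that the whole sum over components of $H$ contains the single summand $c\rho_i$, which by hypothesis exceeds $1$. This makes the total contribution of $H$ strictly larger than $-1$, which is all that is needed. One should also remark that $k$ must be a \emph{positive} integer for $\lambda$ to be a candidate jumping number at all (the case $k=0$ is vacuous since then $\frac{k}{e_i}=0\not>\frac{1}{\rho_i}$), so the hypothesis $\frac{k}{e_i}>\frac{1}{\rho_i}$ indeed forces $k\geqslant 1$ and $\lambda>0$.
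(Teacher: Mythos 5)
Your proof is correct and follows essentially the same route as the paper's: both sum Lemma~\ref{num} over the connected component $H$ of $H_{\lambda}$ containing $E_i$ to get $\left(\left\lceil K_\pi-\lambda F\right\rceil+H_{\lambda}\right)\cdot H=-2+\sum_{E_\ell\leqslant H}\bigl(\sum_{E_j\in\adj(E_\ell)}\{\lambda e_j-k_j\}+\lambda\rho_\ell\bigr)$, observe that the hypothesis gives $\lambda\rho_i>1$ while all other summands are nonnegative, and conclude via Theorem~\ref{cond_suf}. The only place you add detail is the verification that $E_i\leqslant H_{\lambda}$, which as written uses $\lambda e_i-k_i=k-k_i\in\bZ$ and hence implicitly that $k_i\in\bZ$; the paper's own proof makes the same tacit assumption, so this is not a discrepancy with the argument in the text.
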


\begin{proof}
Let $H\leqslant H_\lambda$ be the connected component that contains
the dicritical divisor $E_i$.  For
$\lambda=\frac{k}{e_i}>\frac{1}{\rho_i}$ we have

  \begin{eqnarray*}
( \left\lceil K_\pi- \lambda F\right\rceil + H_\lambda )\cdot H & =
& \sum_{ E_j \in \adj(H)} \left\{ \lambda e_j -k_j\right\} +
\sum_{E_j \leqslant H_\lambda} \lambda \rho_j  -2   \\
&> & \sum_{ E_j \in \adj(H)} \left\{ \lambda e_j -k_j\right\} +
\sum_{\stackrel{E_j \leqslant H_\lambda}{j\neq i}} \lambda \rho_j +1
-2 \hskip 2mm \geqslant \hskip 2mm -1
\end{eqnarray*}
and the result follows from Theorem \ref{cond_suf}.
\end{proof}

For the boundary case $\lambda = \frac{1}{\rho_i}$ we have the
following criteria.

\begin{proposition}
Let $\fa \subseteq \cO_{X,O}$ be an $\fM$-primary ideal. Let $k\in
\bN$ be a  non-negative integer such that $\frac{k}{e_i} = \frac{1}{\rho_i}$. Then, the following are equivalent:
 \begin{itemize}
  \item[i)] $\lambda = \frac{1}{\rho_i}$ is not a jumping number.
  \item[ii)] $H_\lambda=E$ is the whole exceptional component, and $E_i$ is the only dicritical divisor.
\end{itemize}
\end{proposition}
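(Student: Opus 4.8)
The plan is to prove the equivalence using the criterion from Theorem~\ref{cond_suf}, together with the explicit formula in Lemma~\ref{num} and the analysis carried out in the proof of Proposition~\ref{Num_condition_c}. Throughout, write $\lambda = \frac{1}{\rho_i} = \frac{k}{e_i}$, so that $\lambda e_i - k_i = k - k_i \in \bZ$, confirming $E_i \leqslant H_\lambda$. Let $H \leqslant H_\lambda$ denote the connected component containing the dicritical divisor $E_i$. By Theorem~\ref{cond_suf}, $\lambda$ fails to be a jumping number if and only if $\left(\left\lceil K_\pi - \lambda F\right\rceil + H_\lambda\right)\cdot H' = -1$ for \emph{every} connected component $H' \leqslant H_\lambda$.

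First I would prove the implication ii)$\Rightarrow$i). Assume $H_\lambda = E$ is the whole exceptional divisor and $E_i$ is its only dicritical component. Then there is a single connected component (namely $E$ itself), and the computation in the second part of the proof of Proposition~\ref{Num_condition_c} gives
\[
\left(\left\lceil K_\pi - \lambda F\right\rceil + E\right)\cdot E = -2 + \sum_{E_j \leqslant E}\Bigl(\sum_{E_\ell \in \adj(E_j)}\left\{\lambda e_\ell - k_\ell\right\}\Bigr) + \lambda\sum_{E_j \leqslant E}\rho_j.
\]
Since $H_\lambda = E$, every fractional part $\left\{\lambda e_\ell - k_\ell\right\}$ vanishes, and since $E_i$ is the only dicritical divisor, $\sum_j \rho_j = \rho_i$, so the last term is $\lambda \rho_i = 1$. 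Hence the intersection number equals $-2 + 0 + 1 = -1$, and by Theorem~\ref{cond_suf}, $m(\lambda) = 0$, i.e.\ $\lambda$ is not a jumping number.

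For the converse i)$\Rightarrow$ii), or equivalently its contrapositive, I would argue as follows. Suppose ii) fails. The connected component $H$ containing $E_i$ satisfies, by Lemma~\ref{num} summed over its components (as in Proposition~\ref{Num_condition_c}),
\[
\left(\left\lceil K_\pi - \lambda F\right\rceil + H_\lambda\right)\cdot H = -2 + \sum_{E_j \leqslant H}\Bigl(\sum_{E_\ell \in \adj(E_j)}\left\{\lambda e_\ell - k_\ell\right\}\Bigr) + \lambda\sum_{E_j \leqslant H}\rho_j.
\]
The term $\lambda\sum_{E_j \leqslant H}\rho_j$ is at least $\lambda\rho_i = 1$, and the fractional-part sum is $\geqslant 0$, so the intersection number is $\geqslant -1$, with equality forcing both: (a) $\sum_{E_j\leqslant H}\rho_j = \rho_i$, i.e.\ $E_i$ is the only dicritical divisor inside $H$; and (b) all fractional parts $\left\{\lambda e_\ell - k_\ell\right\}$ with $E_\ell$ adjacent to $H$ vanish, i.e.\ $\adj(H) = \emptyset$, which (since the exceptional divisor is connected) forces $H = E$. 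Thus $\left(\left\lceil K_\pi - \lambda F\right\rceil + H_\lambda\right)\cdot H = -1$ \emph{only if} ii) holds; since ii) fails, this intersection number is $> -1$, and Theorem~\ref{cond_suf} gives $m(\lambda) > 0$, so $\lambda$ is a jumping number. This proves the contrapositive of i)$\Rightarrow$ii), completing the equivalence.

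The main obstacle I anticipate is making precise the bookkeeping when $E_i$ is contained in a \emph{proper} connected component $H \subsetneq E$: one must carefully separate the contribution of the dicritical excess from the adjacency correction, and in particular confirm that when $H \neq E$ there is always at least one adjacent component contributing a strictly positive fractional part (equivalently, that such a component is not in $H_\lambda$) — this is what the total excess being positive and the connectedness/tree structure of $E$ guarantee, but it should be spelled out. A secondary subtlety is to note that even if ii) fails because there is a \emph{second} dicritical divisor inside $E$ (with $H_\lambda = E$ still possible), the excess sum $\sum_{E_j\leqslant E}\rho_j$ then strictly exceeds $\rho_i$, and since $\lambda\rho_j$ need not be an integer for $j\neq i$ one should invoke Corollary~\ref{cor-integer} to conclude the intersection number jumps by a full integer, landing at $\geqslant 0 > -1$.
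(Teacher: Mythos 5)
Your proposal is correct and follows essentially the same route as the paper: apply Theorem~\ref{cond_suf} to the connected component $H$ of $H_\lambda$ containing $E_i$, compute $\left(\left\lceil K_\pi-\lambda F\right\rceil+H_\lambda\right)\cdot H = -2 + \sum_{E_j\in\adj(H)}\left\{\lambda e_j-k_j\right\} + \lambda\sum_{E_j\leqslant H}\rho_j$ with $\lambda\rho_i=1$, and observe that equality with $-1$ forces $\adj(H)=\emptyset$ (hence $H=E=H_\lambda$ by connectedness of the exceptional divisor) and $\rho_j=0$ for $j\neq i$. Your write-up is if anything slightly more explicit than the paper's about why the single component $H$ suffices for both implications; the final worry about needing Corollary~\ref{cor-integer} is unnecessary, since Theorem~\ref{cond_suf} only requires the (automatically integral) intersection number to exceed $-1$.
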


\begin{proof}
Let $H\leqslant H_\lambda$ be the connected component that contains
the dicritical divisor $E_i$.  For
$\lambda=\frac{k}{e_i}=\frac{1}{\rho_i}$ we have
$$( \left\lceil K_\pi- \lambda F\right\rceil + H_\lambda )\cdot H  =  \sum_{ E_j \in \adj(H)} \left\{ \lambda e_j -k_j\right\} +
\sum_{\stackrel{E_j \leqslant H_\lambda}{j\neq i}} \lambda \rho_j +1
-2$$ By Theorem \ref{cond_suf},
$\lambda=\frac{k}{e_i}=\frac{1}{\rho_i}$ is not a jumping number
 when this intersection multiplicity is $-1$.
Notice that a divisor $E_j$ satisfies $\left\{ \lambda e_j
-k_j\right\}=0$ if and only if $E_j \leqslant H_\lambda$. Thus
$$\sum_{ E_j \in \adj(H)} \left\{ \lambda e_j -k_j\right\}=0$$ if
and only if $\adj(H)=\emptyset$, or equivalently when $H_\lambda = E$. On the other hand
$$\sum_{\stackrel{E_j \leqslant H_\lambda}{j\neq i}} \lambda
\rho_j=0$$ if and only if $\rho_j=0$ for all $j\neq i$, i.e. when
there are no dicritical divisors besides $E_i$.

\end{proof}

Notice that the result above also generalizes the fact that $1$ is not a jumping
number for simple $\fM$-primary ideals. We can also extend to our setting
J\"arviletho's result on the behavior of the jumping numbers in the
interval $(1,2]$ given in \cite[Theorem 9.9]{Jar11} for simple complete ideals in a smooth surface.

\begin{theorem} \label{jn1-2}
 Let $\fa \subseteq \cO_{X,O}$ be an $\fM$-primary ideal. The only jumping numbers in the interval $(1,2]$ are the following:
 \begin{itemize}
  \item[$\bullet$] $\lambda+1$, where $\lambda\in(0,1]$ is a jumping number.
  \item[$\bullet$] $\lambda=\frac{k}{e_i}$, \hskip 2mm for $e_i< k\leqslant 2e_i$ { with $E_i$ dicritical divisor.}
\end{itemize}
\end{theorem}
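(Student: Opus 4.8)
The plan is to play the interval $(1,2]$ off against $(0,1]$ using the two structural facts established above: the periodicity $H_c=H_{c+1}$ of the maximal jumping divisor (Lemma \ref{periodic_jd}) and the resulting control on the growth of multiplicities, $m(c+1)-m(c)=\sum_{E_i\leqslant H_c}\rho_i\geqslant 0$ (Proposition \ref{growth}). In particular $m$ is non-decreasing along $c\mapsto c+1$, and the size of the jump is governed entirely by the dicritical components contained in $H_c$.

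First I would check that the two displayed families really consist of jumping numbers. If $\lambda\in(0,1]$ is a jumping number then $m(\lambda)>0$, so by Proposition \ref{growth} we get $m(\lambda+1)=m(\lambda)+\sum_{E_i\leqslant H_\lambda}\rho_i\geqslant m(\lambda)>0$, and $\lambda+1\in(1,2]$ is again a jumping number. For the second family, a dicritical divisor satisfies $\rho_i\geqslant 1$, so for any integer $k$ with $e_i<k\leqslant 2e_i$ we have $\frac{k}{e_i}>1\geqslant\frac{1}{\rho_i}$; hence the theorem on jumping numbers contributed by dicritical divisors proved just above applies and shows that $\frac{k}{e_i}\in(1,2]$ is a jumping number.

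For the converse, take a jumping number $c\in(1,2]$. By Lemma \ref{periodic_jd} we have $H_c=H_{c-1}$ with $c-1\in(0,1]$, and Proposition \ref{growth} gives $m(c)=m(c-1)+\sum_{E_i\leqslant H_{c-1}}\rho_i$. If $m(c-1)>0$ then $c-1$ is a jumping number in $(0,1]$ and $c=(c-1)+1$ lies in the first family. If $m(c-1)=0$ then $m(c)=\sum_{E_i\leqslant H_c}\rho_i$, which is strictly positive because $c$ is a jumping number; hence $H_c$ contains some dicritical component $E_i$. But $E_i\leqslant H_c$ is exactly the condition $ce_i-k_i\in\bZ$, i.e. $c$ is a candidate jumping number contributed by the dicritical divisor $E_i$, and since $c\in(1,2]$ forces $ce_i\in(e_i,2e_i]$, writing $k=ce_i$ exhibits $c$ as a member of the second family. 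I do not expect a genuine obstacle here: the statement is essentially an assembly of Lemma \ref{periodic_jd} and Proposition \ref{growth}. The only points deserving a little care are the boundary behaviour (for instance $c=2$, where $c-1=1$ may fail to be a jumping number even though $2$ is --- precisely the case $m(1)=0$ above) and the bookkeeping of the normalisation $\lambda=k/e_i$ in the dicritical case.
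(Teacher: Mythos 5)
Your argument is correct and is essentially the paper's own proof: the converse direction is exactly the paper's dichotomy via Lemma \ref{periodic_jd} and Proposition \ref{growth} (if $m(c-1)>0$ you are in the first family, otherwise $m(c)=\sum_{E_i\leqslant H_c}\rho_i>0$ forces a dicritical component in $H_c$), merely phrased directly rather than by contradiction. You additionally verify that both families really are jumping numbers, which the paper leaves implicit; that is a harmless and welcome completion, not a different method.
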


\begin{proof}
Assume that a jumping number $\lambda \in (1,2]$ is not of the
announced types and consider its associated maximal jumping divisor
 $H_\lambda$. If $\lambda$ is not of the first type then
$m(\lambda)-m(\lambda-1)>0$. If it is not of the second type, then
$\rho_i=0$ for any $E_i\leqslant H_\lambda$. Both conditions cannot
be satisfied simultaneously by Proposition \ref{growth} so we get a
contradiction.
\end{proof}


\begin{remark} Let  $\A \subseteq \cO_{X,O}$ be an $\m$-primary ideal. A generic element $f\in \A$ satisfies
 $\J(f^c)=\J(\A^c)$ for any $c \in (0,1)$ so Theorem \ref{jn1-2} says, roughly speaking, that the jumping numbers of $\A$ are
governed by the jumping numbers of a generic element $f\in \A$ and the dicritical divisors of $\A$.
\end{remark}


\section{Poincar\'e series of multiplier ideals}

Let $\A \subseteq \cO_{X,O}$ be an $\m$-primary ideal. In this section we will give a very simple description of the
Poincar\'e  series of multiplier ideals.
$$P_\A (t)= \sum_{c \in \bR_{> 0}} m(c) \hskip 1mm t^{c}= \sum_{c \in (0,1]} \sum_{k\in \bN} m(c+k) \hskip 1mm t^{c+k}$$
To such purpose we only need to control the following two issues: First we have to describe the multiplicities of the jumping numbers
in the interval $(0,1]$. This can be done using the formulas given in Theorem \ref{multiplicity1_c} or
Proposition \ref{virtual2}. Secondly, and equally important, we have to control the recurrence that these multiplicities satisfy.
As shown in Proposition \ref{growth}, dicritical components in the maximal jumping divisor allow us to describe the recurrence.

\vskip 2mm

The main result of this section is the fact that the Poincar\'e series of multiplier ideals is rational
in the sense that it belongs to the field of fractional functions $\bC(z)$, where the indeterminate $z$
corresponds to a fractional power $t^{1/e}$ for $e\in \bN_{>0}$ being the least common multiple of
the denominators of all jumping numbers. The formula for the Poincar\'e series that we obtain  is the following:

\begin{theorem}\label{serie_Poincare}
 Let $\A \subseteq \cO_{X,O}$ be an $\m$-primary ideal. The Poincar\'e series of $\A$ can be expressed as
$$ P_\A (t)= \sum_{c \in (0,1]}\left(\frac{m(c)}{1-t} + \rho_c  \frac{t}{(1-t)^2}\right) t^{c} $$
where $\rho_c=-F\cdot H_c$ and $H_c$ is the maximal jumping divisor associated to $c$.
\end{theorem}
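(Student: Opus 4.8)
The plan is to split the full sum defining $P_{\A}(t)$ according to the fractional part of the exponent. Since the jumping numbers occurring in $(k, k+1]$ are exactly $c+k$ for $c \in (0,1]$ a jumping number, and since the maximal jumping divisor only depends on the fractional part (Lemma \ref{periodic_jd}), we write
\[
P_{\A}(t) = \sum_{c \in (0,1]} \left( \sum_{k \geqslant 0} m(c+k)\, t^{k} \right) t^{c},
\]
so it suffices to identify the inner generating function $\sum_{k\geqslant 0} m(c+k) t^{k}$ for each fixed $c \in (0,1]$ with $m(c) > 0$ (the terms with $m(c)=0$ contribute nothing, and one checks $\rho_c$ can only be nonzero when... actually one must be slightly careful, see below).

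First I would invoke Proposition \ref{growth}, which gives the exact recurrence $m(c+k+1) - m(c+k) = \rho_c$ for all $k \geqslant 0$, where $\rho_c = -F \cdot H_c = \sum_{E_i \leqslant H_c} \rho_i$ is independent of $k$ by the periodicity $H_{c+k} = H_c$. This is a first-order linear recurrence with constant increment, so $m(c+k) = m(c) + k\rho_c$. Summing the resulting arithmetic-like progression against $t^{k}$ is then a routine manipulation of geometric series:
\[
\sum_{k \geqslant 0} \bigl( m(c) + k\rho_c \bigr) t^{k} = \frac{m(c)}{1-t} + \rho_c \frac{t}{(1-t)^2},
\]
using $\sum_{k\geqslant 0} t^k = \tfrac{1}{1-t}$ and $\sum_{k \geqslant 0} k t^k = \tfrac{t}{(1-t)^2}$. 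Substituting this back into the outer sum over $c \in (0,1]$ yields exactly the claimed formula. The rationality statement then follows formally: there are only finitely many candidate jumping numbers in $(0,1]$, all rational with denominators dividing $e = \mathrm{lcm}$ of the $e_i$ (hence dividing the stated $e$), so after the substitution $z = t^{1/e}$ each summand $t^{c} = z^{ce}$ is a genuine power of $z$ and the whole expression lies in $\bC(z)$.

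The one genuine subtlety — and the step I would be most careful about — is making sure the bookkeeping at the endpoints and for non-jumping numbers is consistent. For $c \in (0,1]$ with $m(c) = 0$, the term $\tfrac{m(c)}{1-t} t^c$ vanishes, but we still need $\rho_c \tfrac{t}{(1-t)^2} t^c$ to correctly account for the contributions $m(c+k)$ for $k \geqslant 1$: since $m(c+k) = k\rho_c$ this is automatic, as the formula $m(c+k) = m(c) + k\rho_c$ from Proposition \ref{growth} holds regardless of whether $c$ itself is a jumping number (the maximal jumping divisor $H_c$ and the proposition are stated for arbitrary $c \in \bR_{>0}$). Conversely one should note that if $\rho_c > 0$ but $c$ is not a jumping number, then $c+1, c+2, \dots$ are all jumping numbers, which is consistent with Theorem \ref{jn1-2}. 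So no term is lost and none is double-counted; the decomposition $P_{\A}(t) = \sum_{c\in(0,1]} (\text{inner sum}) t^c$ is simply a regrouping of an absolutely convergent (formal) series, and the identity is proved.
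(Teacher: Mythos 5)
Your argument is correct and is essentially the paper's own proof: decompose the series by fractional part, apply Proposition \ref{growth} together with the periodicity $H_{c+k}=H_c$ to get $m(c+k)=m(c)+k\rho_c$, and sum the resulting arithmetic progression against $t^k$. The extra care you take with non-jumping values of $c$ and with rationality is sound but not needed beyond what the paper records.
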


\begin{proof}
Let $c \in (0, 1 ]$ be a real number. For any $k\in \bN$
we have, applying Proposition \ref{growth}
$$m(c + k)= m(c) + k \rho_c,$$ where $\rho_c= m(c+1)-m(c)=-F\cdot H_c .$ It follows
that
\begin{eqnarray*}
\sum_{k\geqslant 0}m(c + k)\hskip 1mm t^{c + k} & = & m(c ) \hskip 1mm t^{c} +
(m(c) +  \rho_c) \hskip 1mm t^{c +1} + (m(c) +  2\rho_c) \hskip 1mm  t^{c +2}  + \cdots   \\
&= &  \left(\frac{m(c)}{1-t}+ \rho_c \frac{t}{(1-t)^2}\right) t^{c}
\end{eqnarray*}
Thus we get the desired result.
\end{proof}

For the case of simple $\m$-primary ideals we can easily recover the extension
to the case where $X$ has rational singularities of  the main result of Galindo-Monserrat \cite{GM10}.
Our formulation slightly differs from theirs because we collect jumping numbers by the growth of the
multiplicities instead of its critical divisors.

\begin{corollary}\cite[Theorem 2.1]{GM10}
 Let $\A \subseteq \cO_{X,O}$ be a simple $\m$-primary ideal. The Poincar\'e series of $\A$ can be expressed as
$$ P_\A (t)=  \sum_{\substack{c \in (0, 1 ] \\ \rho_c=0}} \frac{m(c)}{1-t} t^{c} +
\sum_{\substack{c \in (0, 1 ] \\ \rho_c=1}}\left(\frac{m(c)}{1-t}+ \frac{t}{(1-t)^2}\right) t^{c} $$
\end{corollary}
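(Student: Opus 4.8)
The plan is to specialize the general formula of Theorem \ref{serie_Poincare} to the case of a simple $\m$-primary ideal, using the known structure of its log-resolution. First I would recall that when $\A$ is simple, its minimal log-resolution has a unique dicritical divisor, say $E_r$, which is necessarily an end of the exceptional divisor, and the total excess is $\rho = \rho_r = 1$. This is the classical description of simple complete ideals (Zariski's theory of complete ideals, or Lipman's correspondence between Rees valuations and dicritical components: here there is exactly one Rees valuation). Consequently, for any $c \in \bR_{>0}$ the quantity $\rho_c = -F \cdot H_c = \sum_{E_i \leqslant H_c} \rho_i$ is either $0$ or $1$, according to whether the maximal jumping divisor $H_c$ contains the unique dicritical component $E_r$ or not.

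Given this dichotomy, I would partition the sum in Theorem \ref{serie_Poincare} over $c \in (0,1]$ into the two groups $\{c : \rho_c = 0\}$ and $\{c : \rho_c = 1\}$. On the first group the term $\rho_c \frac{t}{(1-t)^2} t^c$ vanishes, leaving $\frac{m(c)}{1-t} t^c$; on the second group $\rho_c = 1$ and the summand is exactly $\left(\frac{m(c)}{1-t} + \frac{t}{(1-t)^2}\right) t^c$. Adding the two pieces reproduces verbatim the displayed formula in the corollary. This is essentially a bookkeeping step once the structural fact $\rho_c \in \{0,1\}$ is in place.

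I would also include a sentence explaining the relation to the original Galindo--Monserrat statement: their formula collects jumping numbers according to the ``critical divisor'' that contributes each jump, whereas here the partition is governed by whether the growth parameter $\rho_c$ is $0$ or $1$; for a simple ideal the dicritical divisor $E_r$ plays both roles, so the two organizing principles coincide, and in particular this recovers \cite[Theorem 2.1]{GM10} (and extends it to the rational singularity case, where the same resolution-theoretic description of simple complete ideals holds by Lipman's work).

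The main obstacle, such as it is, is not analytic but lies in making precise the assertion ``$\A$ simple $\Rightarrow$ unique dicritical component which is an end, and $\rho = 1$'': in the smooth case this is Zariski's theorem on simple complete ideals, and in the rational singularity case one should cite the appropriate extension (Lipman \cite{Lip69}). Once that input is granted, the proof is a two-line case split on the value of $\rho_c$, so I would keep the write-up short and point to the structure theory rather than reprove it.
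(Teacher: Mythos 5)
Your proposal is correct and follows exactly the paper's route: the paper's own proof is the one-line observation that a simple $\m$-primary ideal has a unique dicritical divisor with excess $1$, so $\rho_c\in\{0,1\}$ and the formula of Theorem \ref{serie_Poincare} splits into the two displayed sums. Your write-up simply makes that case split explicit.
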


\begin{proof}
Simple $\m$-primary ideals only have one dicritical divisor with excess $1$ so the result follows.
\end{proof}

\subsection{Hodge Spectrum}
Let  $X$ be a smooth complex variety of dimension $d$ and consider an hypersurface  with an isolated singularity at $O$ defined by
$f\in \cO_{X,O}$.
The Hodge spectrum $Sp(f)$ associated to $f$ was introduced by Steenbrink \cite{Ste77} using
the canonical mixed Hodge structure of the cohomology groups of
the Milnor fiber of $f$. It is a fractional polynomial
$$Sp(f)= \sum_{c\in [0,d]} n(c) \hskip 1mm t^{c},$$
where the rational number $c \in \bQ$ is an {\it exponent} or {\it spectral number} if its associated multiplicity $n(c)$ is
strictly positive. It is also known that the sum of all spectral numbers, counted with multiplicity, is equal to the
Milnor number of $f$ and that they are symmetric with respect to $\frac{d}{2}$, i.e. $n(c)=n(d-c)$

\vskip 2mm

Budur \cite{Bud03} established a nice relation between the Hodge spectrum  and the set of multiplier ideals.
More precisely, the multiplicity of  spectral numbers and the multiplicity of the so-called {\it inner} jumping
numbers coincide in the interval $(0,1]$. We point out that the usual jumping numbers
are inner jumping numbers whenever they are not integer numbers in the case of hypersurfaces with isolated singularities.

\vskip 2mm

In the case where $X$ has dimension two we can make a closer relationship between the Hodge spectrum
of a plane curve $f\in \cO_{X,O}$, that we assume as a generic  element of an $\fM$-primary ideal $\fa \subseteq \cO_{X,O}$,
and the Poincar\'e series of multiplier ideals of $\fa$. Roughly speaking,
the information given by the Hodge spectrum is equivalent, taking into account the symmetry with respect to $1$,
to the information given by the terms of the Poincar\'e series in the interval $(0,1)$. The aim of this section
is to strengthen this relationship recovering some old results on the Hodge spectrum of a plane curve by using our methods.

\vskip 2mm

The spectrum of a plane curve has been described by L{\^e} V{\u{a}}n Th{\`a}nh and Steenbrink in \cite{LVTS89}
(see also \cite{LVT88}, \cite{Sch90}).
For the convenience of the reader we will reformulate their result using the terminology we are considering in this paper.
To this aim, we consider a partial order on the exceptional components of the log-resolution. Since we are assuming that $O$ is a smooth point, the exceptional divisor is naturally a {\em rooted} tree of rational curves, where the root $E_1$ is the (strict transform of) the exceptional divisor of the blow-up of $O$. The partial order is then defined by the paths from $E_1$, i.e. $E_i$ precedes $E_j$ if $E_i$ belongs to the chain of components connecting $E_1$ and $E_j$. For any $i \neq 1$, we denote by $p\left(i\right)$ the index of the exceptional component immediately preceding $E_i$, so that $E_{p\left(i\right)}$ belongs to the chain connecting $E_1$ and $E_i$, and $E_i \cdot E_{p\left(i\right)}=1$.
The set of rupture or dicritical divisors different from the root $E_1$ will be denoted $\mathcal{R}$, i.e.
$$\mathcal{R}=\{i\,|\,E_i \neq E_1\text{ is a rupture or dicritical divisor}\}.$$

\begin{theorem} \cite[Theorem 1.5]{LVTS89}
Let $f\in \cO_{X,O}$ be the equation of a plane curve with an isolated singularity at the origin $O$.
Let $c\in\Q$ be a rational number. Then, its associated multiplicity $n(c)$ in the Hodge
spectrum of $f$  is  $n(c)=n'(c)+n''(c)\,,$ where:
    \begin{itemize}
    \item[$\cdot$] $n'(c)=\#\left\{E_i\,|\,i\in\mathcal{R} \text{ and } E_i+E_{p(i)}\leqslant H_{c}\right\}$
    \item[$\cdot$] $n''(c)=\displaystyle \sum_{\substack{E_i\leqslant H_c\\i\in\mathcal{R} \cup\{1\}}}
    \left(-1+\sum_{E_j\in \adj(E_i)}\{c e_j\}+c\rho_i \right)$
   \end{itemize}

\end{theorem}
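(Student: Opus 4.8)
The plan is to connect the Hodge spectrum of the generic plane curve $f\in\fa$ to the Poincaré series of multiplier ideals of $\fa$ via Budur's theorem, and then to recast the resulting combinatorial data into the statement's form using the formulas for $m(c)$ proved in Section \S4. First I would recall that, since $f$ is a generic element of $\fa$, one has $\J(f^c)=\J(\fa^c)$ for $c\in(0,1)$, so the inner jumping numbers of $f$ in $(0,1]$ coincide with the jumping numbers of $\fa$ there, with the same multiplicities; by Budur's result the spectral multiplicity $n(c)$ for $c\in(0,1]$ agrees with $m(c)$ (the multiplicity of the multiplier ideals of $\fa$), and the symmetry $n(c)=n(d-c)=n(2-c)$ extends this to all of $(0,2)$. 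Thus it suffices to prove the claimed identity $n(c)=n'(c)+n''(c)$ for $c\in(0,1]$ by showing $m(c)=n'(c)+n''(c)$, and then to check the boundary/integer cases $c=1$ separately (where one must be careful, since $1$ may fail to be an inner jumping number even when it is a jumping number).

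The heart of the argument is the rewriting of Corollary \ref{multiplicity2}, which gives
$$m(c)=\sum_{E_i\leqslant H_c}\left(\sum_{E_j\in\adj(E_i)}\{ce_j-k_j\}+c\rho_i\right)-\#\{\text{c.c. of }H_c\}\,,$$
into the two pieces $n'(c)$ and $n''(c)$. The strategy is to organize the sum over $E_i\leqslant H_c$ according to the partial order on the rooted tree: group the contributions coming from rupture/dicritical components (and the root $E_1$), which will produce $n''(c)$ after observing that $\{ce_j-k_j\}=\{ce_j\}$ (because $f$ being generic smooth means... actually $k_j$ need not be integral in general, but for the generic plane curve the log-resolution is the one of $f$ and the relevant identity $\{ce_j-k_j\}$ matching $\{ce_j\}$ holds precisely on the combinatorial pattern appearing in $n''$); and then to show that the remaining contributions — those coming from the ``chain'' components of $H_c$ lying strictly between consecutive rupture/dicritical vertices — telescope, together with the $-\#\{\text{c.c.}\}$ term and the $a_{H_c}(E_i)$ data implicit in the connected-component count, to exactly $n'(c)=\#\{E_i\,|\,i\in\mathcal R,\ E_i+E_{p(i)}\leqslant H_c\}$. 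This telescoping is the key computational step: a maximal chain of non-rupture, non-dicritical components $E_{i}$ (each with $a(E_i)=2$, $\rho_i=0$) contributes, via Lemma \ref{num}, either a full cancellation or a single surviving $+1$ depending on whether the chain connects to its predecessor inside $H_c$, which is exactly the condition $E_i+E_{p(i)}\leqslant H_c$ recorded by $\mathcal R$.

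The main obstacle I anticipate is bookkeeping the interaction between the connected-component count $\#\{\text{c.c. of }H_c\}$ and the structure of $H_c$ described in Theorem \ref{thm-ends-Hc}: one must verify that every connected component of $H_c$ either contains a rupture/dicritical vertex (or the root) — so that its ``$-1$'' from the component count is absorbed into the $n''(c)$ summand indexed by that vertex — or else is a single isolated chain-type component, whose contribution must be shown to be accounted for correctly on the $n'(c)$ side via the $E_i+E_{p(i)}\leqslant H_c$ pattern and the periodicity/end-analysis of Section \S3. A secondary subtlety is the treatment of the root $E_1$ and the half-integer shift in Budur's correspondence at $c=1$: I would handle $c=1$ by a direct comparison, using the formula for $m(1)$ (Corollary after \ref{multiplicity2}, giving $m(1)=\rho-1$ in the smooth case) against the right-hand side evaluated at $c=1$, and invoking the symmetry $n(c)=n(2-c)$ of the spectrum together with the known behavior of integer jumping numbers for isolated plane curve singularities. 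Once the identity is verified on $(0,1]$ and at the integer point, the symmetry of both sides under $c\mapsto 2-c$ (which for the right-hand side follows from the self-duality properties of $H_c$ and $H_{2-c}$ together with the periodicity $H_c=H_{c+1}$) extends it to the full range, completing the proof.
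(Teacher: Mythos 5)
Your proposal matches the paper's own treatment: the theorem is quoted from \cite{LVTS89}, and the paper ``recovers'' it exactly as you describe --- via Budur's identification $n(c)=m(c)$ on $(0,1)$ for a generic $f\in\fa$, followed by rewriting the formula of Corollary \ref{multiplicity2} into $n'(c)+n''(c)$ using the structure of $H_c$ from Theorem \ref{thm-ends-Hc} (non-isolated components with $i\notin\mathcal{R}\cup\{1\}$ contribute $0$, isolated ones contribute $+1$ which cancels their connected-component count, and $\{ce_j-k_j\}=\{ce_j\}$ since $k_j\in\bZ$ at a smooth point). The only divergence is that the paper confines itself to $c\in(0,1)$ and does not attempt your symmetry extension to $c\in[1,2)$, which, as you anticipate, is the delicate part (note that $n''$ is not manifestly invariant under $c\mapsto 2-c$, so that step would need a genuine argument rather than an appeal to ``self-duality'').
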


If we assume $f$ as a generic  element of an $\fM$-primary ideal $\fa \subseteq \cO_{X,0}$
we can recover this result  using the formula given in Theorem \ref{multiplicity1_c}.

 \vskip 2mm

\begin{proposition}
Let $f\in \cO_{X,O}$ be the equation of a plane curve with an isolated singularity at the origin $O$.
For any $c \in (0,1)$ we have $n(c) = m(c)$.

\end{proposition}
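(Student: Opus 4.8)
The plan is to match the L\^e V\u{a}n Th\`anh--Steenbrink expression $n(c)=n'(c)+n''(c)$ against the formula for $m(c)$ coming from Theorem~\ref{multiplicity1_c}, working throughout on a common log-resolution $\pi\colon X'\to X$ of the $\fM$-primary ideal $\fa$ of which $f$ is a generic element and of $C=V(f)$. On such a $\pi$ the relevant data agree for $f$ and for $\fa$: $e_i=v_i(f)$, $\rho_i$ is the number of branches of $\widetilde C$ through $E_i$, and for $c\in(0,1)$ one has $\lceil -c\widetilde C\rceil=0$, so $\J(f^c)=\J(\fa^c)$ and in particular the maximal jumping divisor $H_c$ is the same one. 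Since $O$ is a smooth point the $k_i$ are positive integers, hence $\{c e_j-k_j\}=\{c e_j\}$ and $H_c$ consists precisely of the components $E_i$ with $\{c e_i\}=0$; this is what makes the two formulas directly comparable, and one then applies the L\^e V\u{a}n Th\`anh--Steenbrink theorem on this resolution.

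First I would rewrite $m(c)$. Combining Theorem~\ref{multiplicity1_c} with Lemma~\ref{num} and $\sum_{E_i\leqslant H_c}a_{H_c}(E_i)=2a_{H_c}$, one gets
$$m(c)=\sum_{E_i\leqslant H_c}\Big(-1+c\rho_i+\sum_{E_j\in\adj(E_i)}\{c e_j\}\Big)+a_{H_c}.$$
Viewing the exceptional divisor as a tree rooted at $E_1$, each of its edges is $\{E_i,E_{p(i)}\}$ for a unique $i\neq 1$, and such an edge lies in $H_c$ exactly when $E_i\leqslant H_c$ and $E_{p(i)}\leqslant H_c$; therefore $a_{H_c}$ equals the number of $E_i\leqslant H_c$ with $i\neq 1$ and $E_{p(i)}\leqslant H_c$. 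On the other hand $n'(c)$ is the number of $E_i\leqslant H_c$ with $i\in\mathcal{R}$ and $E_{p(i)}\leqslant H_c$, while $n''(c)$ is exactly the sum displayed above but restricted to those $E_i\leqslant H_c$ with $i\in\mathcal{R}\cup\{1\}$. Subtracting and combining the two ``parent'' counts (whose difference runs over $i\notin\mathcal{R}\cup\{1\}$, since $\mathcal{R}\subseteq\{i\neq 1\}$) yields
$$m(c)-n'(c)-n''(c)=\sum_{\substack{E_i\leqslant H_c\\ i\notin\mathcal{R}\cup\{1\}}}\Big(-1+c\rho_i+\sum_{E_j\in\adj(E_i)}\{c e_j\}+\delta_i\Big),$$
where $\delta_i=1$ if $E_{p(i)}\leqslant H_c$ and $\delta_i=0$ otherwise.

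It then remains to see that every summand on the right vanishes. For $E_i\leqslant H_c$ with $i\notin\mathcal{R}\cup\{1\}$ we have $\rho_i=0$ and $a(E_i)\in\{1,2\}$, and since $i\neq 1$ the parent $E_{p(i)}$ is one of the adjacent components. By Corollary~\ref{cor-integer} the quantity $\sum_{E_j\in\adj(E_i)}\{c e_j\}$ is a nonnegative integer. If $a(E_i)=1$ it equals $\{c e_{p(i)}\}$, which must then be $0$, so $E_{p(i)}\leqslant H_c$, $\delta_i=1$, and the summand is $-1+0+1=0$. If $a(E_i)=2$, say $\adj(E_i)=\{E_{p(i)},E_q\}$, the integer $\{c e_{p(i)}\}+\{c e_q\}$ is $0$ or $1$: in the first case both terms vanish, $\delta_i=1$, and the summand is $-1+0+1=0$; in the second case both terms are nonzero (each being $<1$), so $\delta_i=0$ and the summand is $-1+1+0=0$. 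Hence $m(c)=n'(c)+n''(c)=n(c)$ for $c\in(0,1)$.

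I expect the crux to be the two reorganizations in the second paragraph: identifying $a_{H_c}$ with a count of ``parent edges'' inside $H_c$, and recognizing that $n'(c)$ together with $a_{H_c}$ collapses the whole comparison to a sum over the components of $H_c$ carrying no excess and no ramification, which then cancels termwise via Corollary~\ref{cor-integer}. Everything else --- the passage from $\{c e_j-k_j\}$ to $\{c e_j\}$ using $k_j\in\bZ$, the reduction $\J(f^c)=\J(\fa^c)$ on $(0,1)$, and the rewriting of $m(c)$ --- is routine given Theorem~\ref{multiplicity1_c}, Lemma~\ref{num} and Corollary~\ref{cor-integer}.
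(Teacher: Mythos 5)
Your proof is correct, and it follows the same overall strategy as the paper's: compare the L\^e V\u{a}n Th\`anh--Steenbrink expression with the multiplicity formula and show that the components with $i\notin\mathcal{R}\cup\{1\}$ contribute nothing. The difference is in how the combinatorial bookkeeping is organized. The paper works with the connected-component form of $m(c)$ (Corollary \ref{multiplicity2}), uses the rooted-tree structure to convert $n'(c)$ minus the count of $E_i\leqslant H_c$ with $i\in\mathcal{R}\cup\{1\}$ into a count of connected components of $H_c$ meeting $\mathcal{R}\cup\{1\}$, and then disposes of the remaining components by invoking the structure theorem for $H_c$ (Theorem \ref{thm-ends-Hc}): non-isolated non-rupture, non-dicritical components have all neighbours in $H_c$ and lie in a component already counted, while isolated ones contribute $\sum\{ce_j\}=1$ cancelling against the component count. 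You instead keep everything at the level of edges, identifying $a_{H_c}$ with the number of ``parent edges'' inside $H_c$, which collapses the whole comparison to a single termwise sum over $i\notin\mathcal{R}\cup\{1\}$; each summand is then killed directly by the integrality of $c\rho_i+\sum_{E_j\in\adj(E_i)}\{ce_j\}$ (Corollary \ref{cor-integer}) together with the case split $a(E_i)\in\{1,2\}$. Your route avoids Theorem \ref{thm-ends-Hc} altogether (in effect re-deriving the small piece of it that is needed) and yields a cleaner exact cancellation, one term at a time; the paper's route makes the geometric content of the cancellation (which connected components of $H_c$ are ``seen'' by the spectrum) more visible. Both are complete; your explicit justification of why the data for $f$ and for $\fa$ agree on a common log-resolution for $c\in(0,1)$ is a point the paper leaves implicit.
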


\begin{proof}
L{\^e} V{\u{a}}n Th{\`a}nh and Steenbrink's formula states that:
\begin{align*}
n(c) & = \#\left\{E_i\,|\,i\in\mathcal{R} \text{ and } E_i+E_{p(i)}\leqslant H_{c}\right\} + \displaystyle \sum_{\substack{E_i\leqslant H_c\\i\in\mathcal{R} \cup\{1\}}}\left(-1+\sum_{E_j\in \adj(E_i)}\{c e_j\}+c\rho_i \right) \\
& =  \#\left\{E_i\,|\,i\in\mathcal{R} \text{ and } E_i+E_{p(i)}\leqslant H_{c}\right\} - \#\left\{E_i\,|\,i\in\mathcal{R} \cup \left\{1\right\} \text{ and } E_i\leqslant H_{c}\right\}\\
& \qquad + \displaystyle \sum_{\substack{E_i\leqslant H_c\\i\in\mathcal{R} \cup\{1\}}}\left(\sum_{E_j\in \adj(E_i)}\{c e_j\}+c\rho_i \right) \\
& = - \#\left\{E_i\,|\,i\in\mathcal{R}, E_i \leqslant H_c \text{ and } E_{p(i)} \not\leqslant H_{c}\right\} - \delta + \displaystyle \sum_{\substack{E_i\leqslant H_c\\i\in\mathcal{R} \cup\{1\}}}\left(\sum_{E_j\in \adj(E_i)}\{c e_j\}+c\rho_i \right) \\
\end{align*}
where $\delta = 1$ if $E_1 \leqslant H_c$ and $\delta=0$ otherwise. Due to the rooted tree structure of the exceptional divisor, every connected component of $H_c$ has exactly one minimal component $E_i$ (the closest to $E_1$), and clearly $E_{p\left(i\right)} \not\leqslant H_c$ if $i\neq 1$. There is therefore a bijection between the set $\left\{E_i\,|\,i\in\mathcal{R}, E_i \leqslant H_c \text{ and } E_{p(i)} \not\leqslant H_{c}\right\}$ and the connected components of $H_c$ that contain some rupture or dicritical component but do not contain $E_1$. Hence we have proved
$$\#\left\{E_i\,|\,i\in\mathcal{R}, E_i \leqslant H_c \text{ and } E_{p(i)} \not\leqslant H_{c}\right\} + \delta = \#\left\{\substack{ \text{ connected components of } H_{c} \\ \text{ containing a divisor } E_i,
\hskip 2mm  i\in\mathcal{R} \cup\{1\}} \right\},$$
which gives the following expression for $n\left(c\right)$:







\begin{equation} \label{eq-espectre-n}
n(c) = \displaystyle \sum_{\substack{E_i\leqslant H_c\\i\in\mathcal{R} \cup\{1\}}}\left(\sum_{E_j\in \adj(E_i)}\{c e_j\}+c\rho_i \right) -\#\left\{\substack{ \text{ connected components of } H_{c} \\ \text{ containing a divisor } E_i, \hskip 2mm  i\in\mathcal{R} \cup\{1\}} \right\}
\end{equation}

On the other hand, Corollary \ref{multiplicity2} gives (recall that $k_i \in \bZ$ because $O$ is a smooth point)
\begin{equation} \label{eq-espectre-m}
m\left(c\right) = \sum_{E_i \leqslant H_c} \left( \sum_{ E_j \in\adj(E_i)} \left\{c e_j\right\}+c\rho_i\right)-\#\left\{\text{connected components of }H_{c}\right\}.
\end{equation}
To prove that both formulas coincide, we have to consider the terms
\begin{equation*} 
\sum_{E_j\in \adj(E_i)}\{c e_j\}+c\rho_i
\end{equation*}
for the $E_i \leqslant H_c$ with $i \not \in \mathcal{R}\cup\{1\}$, as well as the connected components of $H_c$ containing only components of this kind.

\vskip 2mm

Consider first an $E_i$ which is not an isolated component of $H_c$. On the one hand, by Theorem \ref{thm-ends-Hc}, all its adjacent
components are contained in $H_c$, and hence $\sum_{E_j\in \adj(E_i)}\{c e_j\}=0$. Since it is not dicritical, $\rho_i=0$, and
therefore $E_i$ does not contribute to the first summand of $m\left(c\right)$. On the other hand, the connected component
$H$ of $H_c$ containing $E_i$ contains also either a rupture or dicritical component (again by Theorem \ref{thm-ends-Hc}), and hence
its contribution to the second summand of (\ref{eq-espectre-m}) is already taken into account in (\ref{eq-espectre-n}).

\vskip 2mm

To finish the proof, it remains to consider the $E_i$ which are isolated components of $H_c$. In this case, Theorem \ref{thm-ends-Hc}
says that the contribution of $E_i$ to the first term of (\ref{eq-espectre-m}) is $\sum_{ E_j \in \adj(E_i)} \{c e_j\}=1$, which cancels
with the contribution to the number of connected components.
\end{proof}

\end{document}